\tikzset{
    labl/.style={anchor=south, rotate=90, inner sep=.5mm}
}
\definecolor{qqqqff}{rgb}{0,0,1}
      \string\usetikzlibrary{decorations.markings} to use arrows with markings}{}}{}%
\newtheorem{theorem}{Theorem}[section]
\newtheorem{lemma}[theorem]{Lemma}
\theoremstyle{definition}
\newtheorem{definition}[theorem]{Definition}
\newtheorem{remark}[theorem]{Remark}
\numberwithin{equation}{subsection}
\def\lim{\mathop{\mathrm{lim}}\nolimits}
\def\Pic{\mathop{mathrm{Pic}}}
\def\id{\mathop{\mathrm{id}}\nolimits}
\def\Pic{\mathop{\mathrm{Pic}}\nolimits}
\def\deg{\mathop{\mathrm{deg}}\nolimits}
\def\max{\mathop{\mathrm{max}}\nolimits}
\def\det{\mathop{\mathrm{det}}\nolimits}
\def\dim{\mathop{\mathrm{dim}}\nolimits}
\def\lim{\mathop{\mathrm{lim}}\nolimits}
\def\im{\mathop{\mathrm{im}}\nolimits}
\newtheorem{thmx}{Theorem}
\title[Stratifying by decomposition type]{Stratifying the moduli space of stable vector bundles by decomposition type}
\author{Dario Weissmann}
\address[D.~Weissmann]{Fakult\"{a}t f\"{u}r Mathematik \\
Universit\"{a}t Duisburg-Essen \\
Universit\"{a}ts-strasse 2 \\
45141 Essen \\
Germany}
\email{\href{mailto: dario.weissmann@uni-due.de}{dario.weissmann@uni-due.de}}
\date{\today}
\begin{document}

\begin{abstract}
    The moduli space of $\mu$-stable vector bundles on a normal projective variety
    over an algebraically closed field of characteristic $p\geq 0$
    is stratified with respect to the decomposition type.
    On a smooth projective curve of genus at least $2$
    we obtain mostly sharp dimension estimates for these strata.

    As an application, we obtain a dimension estimate 
    for the closure of the prime to $p$
    trivializable stable bundles in the moduli space of stable vector bundles.
\end{abstract}

\maketitle

\section{Introduction}

Consider a smooth projective curve $C$ over an algebraically closed field
of characteristic $p\geq 0$.
Recently stability of vector bundles has been investigated under pullback
by \'etale Galois covers $D\to C$ of degree prime to the characteristic $p$, see \cite{fun}.
In this paper we extend upon the results of \cite{fun} to obtain
a stratification of the moduli space and estimate the dimension of these
strata over a smooth projective curve.
As an application, we study the closure of the prime to $p$ trivializable vector bundles.

Consider a stable vector bundle $V$ on $C$.
Given an \'etale Galois cover $\pi:D\to C$
the pullback of $V$ to $D$ may fail to remain stable.
However, $V$ is still polystable on $D$ 
and can only decompose in a very special way:
It is a direct sum of stable vector bundles $W_i$ such that 
the Galois group acts transitively on the isomorphism classes of the $W_i$,
see \cite[Lemma 3.8]{fun}.
In particular, the $W_i$ have the same rank $m$.
Fixing the rank $m$ of the $W_i$ as well as the rank $r$ and degree $d$ of $V$,
we obtain a stratification $Z^{s,r,d}(m,\pi)$ of the moduli space 
of stable vector bundles $M^{s,r,d}_C$ on $C$
with respect to a Galois cover
- the \emph{decomposition stratification} with respect 
to the \'etale Galois cover $\pi:D\to C$.
We can estimate the dimension of these strata generalizing
 \cite[Lemma 4.4]{fun}:

\begin{thmx}[Theorem \ref{theorem-dimension-estimate} (ii)]
    \label{thmx-dimension-estimate}
	Let $\pi:D\to C$ be an \'etale Galois cover of a smooth projective curve $C$ of
	genus $g_C \geq 2$.
	Let $r\geq 2$ and $d\in \mathbf{Z}$.
	Let $m\mid r$.
	Then we have
 \[
       \dim(Z^{s,r,d}(m,\pi))\leq rm(g_C-1)+1.
\]
\end{thmx}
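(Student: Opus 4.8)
The plan is to bound $\dim Z^{s,r,d}(m,\pi)$ by attaching to a bundle in the stratum a single stable rank-$m$ bundle on an intermediate cover, and then counting dimensions there. Write $G=\mathrm{Gal}(D/C)$, $n=\deg\pi=|G|$, and let $k$ be the base field. Fix $[V]\in Z^{s,r,d}(m,\pi)$; by \cite[Lemma 3.8]{fun} we have $\pi^*V\cong\bigoplus_{j=1}^t W_j^{\oplus a}$ with the $W_j$ pairwise non-isomorphic stable of rank $m$, the canonical $G$-equivariant structure on $\pi^*V$ permuting the classes $[W_j]$ transitively; comparing ranks gives $ta=r/m$. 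Let $H\leq G$ be the stabilizer of $[W_1]$, so $t=[G:H]$, and let $\rho:D\to D'=D/H$ and $\pi':D'\to C$ be the induced \etale covers. Since $G$ acts freely on $D$, the quotient $D'$ is a smooth projective curve and Riemann--Hurwitz for $\pi'$ gives $g_{D'}-1=t(g_C-1)$.

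The first step is finiteness of a natural correspondence. By adjunction $\Hom_C(V,\pi_*W_1)=\Hom_D(\pi^*V,W_1)=k^{\oplus a}\neq 0$, and since $\deg\pi_*\mathcal{O}_D=0$ for an \etale cover the slopes of $V$ and $\pi_*W_1$ coincide. As $\pi^*\pi_*W_1=\bigoplus_{\sigma\in G}\sigma^*W_1$ is polystable and polystability descends along $\pi$, the bundle $\pi_*W_1$ is polystable of slope $\mu(V)$, so $V$ is isomorphic to one of its finitely many stable summands. Hence $[V]\mapsto G\cdot[W_1]$ is a quasi-finite correspondence from $Z^{s,r,d}(m,\pi)$ to (the $G$-quotient of) the moduli space $M^{s,m}_D$ of stable rank-$m$ bundles on $D$, and therefore $\dim Z^{s,r,d}(m,\pi)$ is at most the dimension of its image.

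The second step bounds this image. By construction $\sigma^*W_1\cong W_1$ for all $\sigma\in H$, so $[W_1]$ lies in the fixed locus $M^{s,m,H}_D$ of the $H$-action on $M^{s,m}_D$. Because $H$ acts freely with quotient $D'$, such an $H$-invariant stable bundle descends --- after trivializing a class in $H^2(H,k^\times)$, i.e.\ possibly only as a twisted bundle --- to a stable rank-$m$ bundle on $D'$, and $\rho^*$ is finite onto its image; since moduli of (twisted) rank-$m$ bundles on $D'$ have dimension $m^2(g_{D'}-1)+1$, we get
\[
    \dim M^{s,m,H}_D \leq m^2(g_{D'}-1)+1 = m^2 t(g_C-1)+1 .
\]
Using $t=[G:H]\leq ta=r/m$ and maximizing over the finitely many subgroups $H\leq G$, the two steps combine to
\[
    \dim Z^{s,r,d}(m,\pi)\leq m^2\,\tfrac{r}{m}\,(g_C-1)+1 = rm(g_C-1)+1 .
\]

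The main obstacle is to upgrade the correspondence of the first step to an honest morphism of the (reduced) moduli spaces or stacks in families, with controllable fibers, rather than a set-theoretic map: one must know that the decomposition type $\pi^*V\cong\bigoplus W_j^{\oplus a}$ and the stabilizer $H$ are locally constant on $Z^{s,r,d}(m,\pi)$, so that $\rho$, the curve $D'$, and the descent all vary in families. A secondary difficulty is the descent itself, since genuine Galois descent along $\rho$ may be obstructed by a Brauer class in $H^2(H,k^\times)$; here one checks that $M^{s,m,H}_D$ still has the dimension of a moduli space of rank-$m$ bundles on $D'$, using that twisting by a fixed Brauer class does not alter the deformation space $H^1(D',\End)$.
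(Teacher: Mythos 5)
Your strategy runs parallel to the paper's proof of Theorem \ref{theorem-dimension-estimate}: both extract from $\pi^*V\cong\bigoplus_i W_i^{\oplus e}$ the stabilizer $H$ of $[W_1]$, pass to the intermediate cover $D'=D/H$ of degree $n=[G:H]$ over $C$, use that $W_1$ descends to $D'$ up to a twist, and count $m^2(g_{D'}-1)+1=m^2n(g_C-1)+1\leq rm(g_C-1)+1$. The difference is the transfer mechanism. The paper covers $\pi^{\ast}(Z^{s,r,d}(n,e,\pi))$ by the images of finitely many morphisms of \emph{projective} varieties $P(d')\times M^{ss,m}_{L'}\to M^{ss,r,d\deg(\pi)}_D$, $(L,W_1')\mapsto\bigoplus_{\sigma\in\Sigma}\sigma^{\ast}(L\otimes (W_1')_{\mid D})^{\oplus e}$, and then invokes that $\pi^{\ast}$ is finite on moduli spaces (\cite[Theorem 4.2]{fun}). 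Since only a set-theoretic inclusion into a finite union of closed images is needed, no local constancy of $(n,e)$, of $H$, or of $D'$ over the stratum ever has to be established. You instead go through the adjoint, realizing $V$ inside $\pi_{\ast}W_1$, which is exactly what forces you to confront the algebraization problem you flag.

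Two points are genuine gaps. First, the claim that $\pi_{\ast}W_1$ is polystable because ``polystability descends along $\pi$'' is false for general \'etale Galois covers: descent of polystability is only available for prime to $p$ covers (Lemma \ref{lemma-basic}), and for an \'etale $\mathbb{Z}/p\mathbb{Z}$-cover the bundle $\pi_{\ast}\mathcal{O}_D$ is a non-split self-extension of $\mathcal{O}_C$ (it has $h^0=1$), hence not polystable. This is repairable: a nonzero map from the stable $V$ to the semistable $\pi_{\ast}W_1$ of equal slope still exhibits $V$ as a Jordan--H\"older factor, of which there are finitely many, so quasi-finiteness of your correspondence survives. Second, and more seriously, the ``main obstacle'' you name is left unresolved, and a set-theoretic quasi-finite correspondence does not by itself bound dimension. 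You would need to realize it as a finite union of closed subvarieties $\Gamma\subseteq M^{s,r,d}_C\times M^{ss,m,d''}_D$ with $\mathrm{pr}_1(\Gamma)\supseteq Z^{s,r,d}(m,\pi)$ and $\mathrm{pr}_2$ quasi-finite on $\Gamma$ (say via semicontinuity of $\hom(V,\pi_{\ast}W)$ on covers of the coarse moduli spaces carrying universal families), or else reverse direction and parametrize $\pi^{\ast}V$ as the paper does. Similarly, the descent of $W_1$ as a Brauer-twisted bundle on $D'$ would require the moduli theory of twisted sheaves; the paper's \cite[Lemma 4.3]{fun} (descent after tensoring by a line bundle $L$ on $D$, with $L$ confined to the $g_{D'}$-dimensional space $P(d')$) achieves the same count with standard moduli spaces. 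As written, your argument is a correct outline whose key algebraization step is missing.
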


While one could also define the decomposition stratification with respect
to a Galois morphism instead of a \'etale Galois cover, this does
not yield a different stratification. Indeed, in \cite[Theorem 1.2]{bdp} it was shown by Biswas, Das, and Parameswaran that genuinely ramified morphisms of normal projective varieties preserve stability under pullback.
Thus, only the \'etale part of a Galois morphism matters for the decomposition
behaviour.

Note that this stratification is dependent on the choice of the cover $D\to C$.
To obtain a canonical stratification, recall that
a vector bundle is \emph{prime to $p$ stable} if it remains stable
on all prime to $p$ Galois covers, see \cite[Definition 2.2]{fun}.
There is a prime to $p$ cover $C_{r,good}\to C$ checking for prime to $p$ stability of vector bundles of rank $r$,
see \cite[Theorem 1]{fun}.
Iterating the cover $C_{r,good}$, we obtain a prime to $p$ Galois cover
$C_{r,split}\to C$ checking for the eventual decomposition 
with respect to prime to $p$ Galois covers. 

The decomposition stratification with respect to
$C_{r,split}\to C$ is independent of the cover $C_{r,split}\to C$
and is called the \emph{prime to $p$ decomposition stratification},
see Definition \ref{definition-prime-to-p-strata}.
Fixing the degree $d$,
we denote this stratification for $m\mid r$
by $Z^{s,r,d}(m)$.
Then the dimension estimates obtained for arbitrary Galois covers are mostly sharp.
This is a generalization of \cite[Theorem 2]{fun}.

\begin{thmx}[Theorem \ref{theorem-dimension-estimate-sharp}]
    \label{thmx-dimension-estimate-sharp}
    Let $C$ be a smooth projective curve of genus $g_C\geq 2$.
    Let $r\geq 2$ and $d\in \mathbb{Z}$.
    Then for $m\mid r$ we have the following:
    \begin{itemize}
        \item $\dim(Z^{s,r,d}(m))\leq (\frac{r}{m})'m^2(g_C-1)+1$, where $(\frac{r}{m})'$ denotes the prime to $p$ part of $\frac{r}{m}$.
        \item If $p \nmid \frac{r}{m}$, then we have $\dim(Z^{s,r,d}(m))=rm(g_C-1)+1$.
    \end{itemize}
\end{thmx}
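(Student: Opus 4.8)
The plan is to prove each bullet of Theorem \ref{thmx-dimension-estimate-sharp} by combining the general estimate of Theorem \ref{thmx-dimension-estimate} (our Theorem \ref{theorem-dimension-estimate}) with a careful analysis of the specific cover $C_{r,split}\to C$, exploiting its prime-to-$p$ nature. Let me sketch how I would approach both parts.

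Let me work through the setup. The key structural fact is that $Z^{s,r,d}(m)$ is the prime-to-$p$ decomposition stratum, defined via the cover $C_{r,split}\to C$, which checks for eventual decomposition under all prime-to-$p$ Galois covers. A stable bundle $V$ lies in this stratum precisely when its pullback to $C_{r,split}$ decomposes into $r/m$ stable summands $W_i$ of rank $m$ permuted transitively by the Galois group. The Galois group $G$ of $C_{r,split}\to C$ acts on the set of isomorphism classes of the $W_i$; the stabilizer of any $W_i$ is a subgroup $H\leq G$ of index $r/m$.

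For the first bullet, the plan is as follows. I would parametrize the stratum by the data of a stable rank-$m$ bundle $W$ on an intermediate cover $D'\to C$ (corresponding to the stabilizer $H$), together with the descent data realizing $V$ as the pushforward of $W$ to $C$. The dimension of the space of such $W$ on $D'$ is governed by $\dim M^{s,m,\cdot}_{D'} = m^2(g_{D'}-1)+1$, where $g_{D'}$ is the genus of $D'$. The crucial point is that the degree of the cover $D'\to C$ is $r/m$, but only its prime-to-$p$ part $(r/m)'$ contributes effectively to the dimension count, because the wild (p-power) part of the ramification behaviour does not enlarge the moduli of descended bundles in the same way. Applying Riemann--Hurwitz to relate $g_{D'}$ to $g_C$ and the degree $(r/m)'$, and accounting for the finite choices of descent data (contributing the $+1$), I would obtain the bound $(r/m)'m^2(g_C-1)+1$. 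The main obstacle here is precisely controlling how the $p$-part of $r/m$ fails to contribute: one must show that the extra stable summands forced by a $p$-power index do not add moduli dimensions, which requires understanding the rigidity of the $G$-action on isomorphism classes in the wildly ramified direction.

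For the second bullet, when $p\nmid \frac{r}{m}$ we have $(r/m)'=\frac{r}{m}$, so the first bullet already gives the upper bound $\dim(Z^{s,r,d}(m))\leq rm(g_C-1)+1$, matching Theorem \ref{thmx-dimension-estimate}. It therefore remains to prove the reverse inequality, establishing sharpness. Here the plan is to construct an explicit family of stable bundles realizing the full dimension: starting from a generic family of stable rank-$m$ bundles on the intermediate cover $D'$, pushing forward to $C$, and verifying both that the pushforwards are stable on $C$ and that they genuinely lie in the stratum $Z^{s,r,d}(m)$ (i.e. their decomposition type is exactly $(m, r/m)$ and not finer or coarser). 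The sharpness argument should mirror the construction in \cite[Theorem 2]{fun}, now in the generalized setting. The hard part will be ensuring genericity: one must show that the generic pushforward does not accidentally acquire a finer decomposition on some further cover, which would place it in a smaller stratum, and that the $r/m$ summands remain non-isomorphic so that the full moduli dimension of $M^{s,m,\cdot}_{D'}$ is attained rather than collapsed by coincidences among the $W_i$.

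Overall, the most delicate step across both parts is the bookkeeping that isolates the prime-to-$p$ part $(r/m)'$ as the genuine contributor to the dimension, since this is exactly where the characteristic-$p$ phenomena enter and where the estimate ceases to be sharp when $p\mid \frac{r}{m}$. I expect the sharpness construction in the second bullet to be comparatively routine given the machinery of \cite{fun}, whereas the characteristic-$p$ rigidity underlying the upper bound in the first bullet is where the real work lies.
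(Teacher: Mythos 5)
Your plan for the second bullet is essentially the paper's argument: take a cyclic cover $\pi:D\to C$ of degree $n=\frac{r}{m}$ (which exists because $p\nmid n$, so $n$-torsion line bundles of exact order $n$ exist on $C$), intersect the open locus $U$ of stable rank-$m$ bundles $W$ with $\sigma^{\ast}W\ncong W$ for $\sigma\neq e_G$ (Lemmas \ref{lemma-construction-orbit} and \ref{lemma-pushforward-torsor}, which give stability of $\pi_{\ast}W$ and finiteness of $\pi_{\ast}$) with the open dense locus of prime to $p$ stable bundles, and push forward; your two ``genericity'' worries are exactly the two conditions cutting out this open set, so that part of the sketch is sound, if still only an outline.

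The first bullet, however, contains a genuine error. You attribute the replacement of $\frac{r}{m}$ by $(\frac{r}{m})'$ to ``wild ($p$-power) ramification behaviour'' of the cover $D'\to C$ and to a ``rigidity of the $G$-action in the wildly ramified direction.'' There is no ramification here at all: $C_{r,split}\to C$ is by construction a prime to $p$ \emph{\'etale} Galois cover, and every intermediate cover is \'etale, so Riemann--Hurwitz contributes the full degree and nothing is lost to ramification. The actual mechanism is purely group-theoretic: in the decomposition $V_{\mid D}\cong\bigoplus_{i=1}^{n}W_i^{\oplus e}$ the Galois group acts transitively on the $n$ isomorphism classes, so $n=[G:H]$ divides $\#G=\deg(\pi)$, which is prime to $p$; hence $n$ is prime to $p$ and the $p$-part of $\frac{r}{m}$ is forced entirely into the multiplicity $e$. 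Repeated isomorphic summands contribute no extra moduli, so in the bound $\dim Z^{s,r,d}(n,e,\pi)\leq nm^2(g_C-1)+1$ from Theorem \ref{theorem-dimension-estimate}(i) the maximum over admissible $(n,e)$ with $mne=r$ and $p\nmid n$ is attained at $n=(\frac{r}{m})'$, $e=$ the $p$-part of $\frac{r}{m}$. This is exactly Theorem \ref{theorem-dimension-estimate}(iii), and the paper's proof of the first bullet is a one-line application of it to $C_{r,split}\to C$. Your proposed route would not close up as written, because the claim it rests on (that wild ramification suppresses moduli) refers to a phenomenon that is absent from the setting.
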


As an application, we can estimate the dimension of the closure of the prime to $p$ trivializable (semi)stable vector bundles.
This should be compared on the one hand with a result of Ducrohet and Mehta,
see \cite[Corollary 5.1]{dm},
asserting that in positive characteristic the \'etale trivializable bundles are dense in the moduli space
of semistable vector bundles of degree $0$.
On the other hand, the non-density of the prime to $p$ trivializable bundles
was recently obtained in \cite[Corollary 3]{fun} and in rank $2$ and characteristic $0$
by Ghiasabadi and Reppen, see \cite[Corollary 4.16]{gr}. Thus, it is natural to ask for a description
of the closure of the prime to $p$ trivializable stable vector bundles.
This is closely related to the smallest prime to $p$ decomposition stratum:

\begin{thmx}[Theorem \ref{theorem-closure}]
\label{thmx-closure}
    Let $C$ be a smooth projective curve of genus $g_C\geq 2$.
    Let $r\geq 2$.
    Let $Z^{s,r}$ (resp. $Z^{ss,r}$) be the closure of the prime to $p$ 
    trivializable (semi)stable vector bundles in $M^{s,r,0}_C$ 
    (resp. $M^{ss,r,0}_C$).
    Then we have the following:
    \begin{itemize}
        \item $Z^{s,r}\subseteq Z^{s,r,0}(1)$.
        \item $\dim(Z^{s,r})\leq r'(g_C-1)+1$,
            where $r'$ is the prime to $p$ part of $r$.
        \item If $p\nmid r$, then $\dim(Z^{s,r})= \dim(Z^{s,r,0}(1))=r(g_C-1)+1$.
        \item $\dim(Z^{ss,r})=rg_C$.
    \end{itemize}
\end{thmx}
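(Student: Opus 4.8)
The plan is to identify $Z^{s,r}$ with a dense subset of the smallest prime to $p$ decomposition stratum $Z^{s,r,0}(1)$, and to realize the claimed dimensions by explicit families of trivializable bundles obtained by pushing forward torsion line bundles along covers.

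First I would observe that a prime to $p$ trivializable stable bundle lies in $Z^{s,r,0}(1)$: if $\pi^*V\cong\mathcal O^r$ on some prime to $p$ Galois cover, then the eventual decomposition of $V$ on $C_{r,split}$ is into line bundles, i.e.\ of type $m=1$. To upgrade this to the closure I would show that $Z^{s,r,0}(1)$ is closed in $M^{s,r,0}_C$. Fixing $\pi\colon C_{r,split}\to C$, pullback is continuous in families and $V\in Z^{s,r,0}(1)$ exactly when $\pi^*V$ is a direct sum of $r$ degree-$0$ line bundles; the locus of such polystable bundles in $M^{ss,r,0}_{C_{r,split}}$ is the image of $\mathrm{Sym}^r\mathrm{Jac}(C_{r,split})$, hence closed by properness, so a limit of type-$1$ bundles stays of type $1$. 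This gives $Z^{s,r}\subseteq Z^{s,r,0}(1)$, and then $\dim Z^{s,r}\le r'(g_C-1)+1$ follows from the first part of Theorem~\ref{thmx-dimension-estimate-sharp} with $m=1$.

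For the sharp equality when $p\nmid r$ I would build an $(r(g_C-1)+1)$-dimensional family directly. As $p\nmid r$ and $g_C\ge 2$, there is a connected prime to $p$ étale cyclic Galois cover $\pi\colon D\to C$ of degree $r$ with group $G$. For a line bundle $L$ on $D$ outside the proper closed loci where $g^*L\cong L$ for some $g\neq e$, the factors of $\pi^*\pi_*L=\bigoplus_{g\in G}g^*L$ are pairwise distinct and simply transitively permuted, so $\pi_*L$ is stable of rank $r$ and, since $\deg\pi_*L=\deg L$, of degree $0$ once $\deg L=0$. If moreover $L$ is prime to $p$ torsion then so is each $g^*L$, hence $\pi_*L$ is prime to $p$ trivializable. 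The assignment $L\mapsto[\pi_*L]$ has finite ($G$-orbit) fibres, so its image has dimension $g_D=r(g_C-1)+1$ by Riemann--Hurwitz, and the prime to $p$ torsion points are Zariski dense in $\mathrm{Jac}(D)$; thus the trivializable $\pi_*L$ are dense in this image and $\dim Z^{s,r}\ge r(g_C-1)+1$. Together with the upper bound and the second part of Theorem~\ref{thmx-dimension-estimate-sharp} (giving $\dim Z^{s,r,0}(1)=r(g_C-1)+1$), all three quantities coincide.

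For $Z^{ss,r}$ I would prove matching bounds. The pullback of each stable factor $V_j$ of a prime to $p$ trivializable semistable bundle is a direct summand of $\mathcal O_D^r$, hence globally generated of slope $0$ and therefore trivial, so each $V_j$ is itself prime to $p$ trivializable. Stratifying by decomposition type $\sum_i a_im_i=r$ ($a_i$ distinct factors of rank $m_i$) and bounding each factor locus by the corresponding stable bound $m_i'(g_C-1)+1$ (with rank-one factors contributing $\dim\mathrm{Jac}(C)=g_C$), the type-$\tau$ locus has dimension at most $\sum_i a_i(m_i'(g_C-1)+1)\le (g_C-1)\sum_i a_im_i+\sum_i a_i\le r(g_C-1)+r=rg_C$, with equality forced by the all-rank-one type; hence $\dim Z^{ss,r}\le rg_C$. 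Conversely the bundles $\bigoplus_{i=1}^rL_i$ with $L_i$ prime to $p$ torsion line bundles are semistable and trivializable and are dense in the image of $\mathrm{Sym}^r\mathrm{Jac}(C)\to M^{ss,r,0}_C$, which is generically injective of dimension $rg_C$, so $\dim Z^{ss,r}=rg_C$. I expect the main obstacles to be the closedness in the first bullet, which I control through properness of the symmetric product of the Jacobian of the cover, and the two sharpness statements, which hinge on checking that the pushforward and direct-sum families genuinely consist of prime to $p$ trivializable bundles and, crucially, on the Zariski density of prime to $p$ torsion points in the relevant Jacobians, allowing these countable families to sweep out the full-dimensional strata.
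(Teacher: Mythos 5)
Your proposal follows essentially the same route as the paper: containment in the smallest stratum $Z^{s,r,0}(1)$ (whose closedness the paper gets from Lemma \ref{lemma-strata-closed} rather than re-deriving it via $\mathrm{Sym}^r$ of the Jacobian), the upper bound from Theorem \ref{theorem-dimension-estimate}, sharpness by pushing forward generic line bundles along a degree-$r$ cyclic cover using the finiteness of $\pi_{\ast}$ on the locus with trivial stabilizer and the density of prime to $p$ torsion, and the semistable case by decomposing into stable trivializable summands and summing the stable bounds. The argument is correct; your added detail that each stable factor is itself trivializable (as a globally generated slope-$0$ summand of the trivial bundle) is a point the paper leaves implicit.
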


The paper is structured as follows:
We begin the first section by recalling some properties of semi\-stable vector bundles under pushforward and pullback. Then we define the decomposition stratification and estimate the dimension of the strata.

In the second section, we study stable vector bundles under \'etale pushforward.
Pushforward under an \'etale morphism of smooth projective curves
induces a finite morphism on the level
of moduli spaces.
Furthermore, we identify the big open
where the direct image of a stable vector bundle is stable.

In the third section, we define the prime to $p$ decomposition stratification and
obtain the mostly sharp dimension estimates.

In the last section, we apply the theory developed to study the closure of the prime to $p$ trivializable (semi)stable vector bundles.

\subsection*{Notation}
We work over an algebraically closed field $k$ of characteristic $p\geq 0$.
	A variety is a separated integral scheme of finite type over $k$.
	A curve is a variety of dimension $1$.
	
	
	If $X$ is a projective variety we implicitly choose an ample bundle
	$\mathcal{O}_X(1)$ on $X$.  
        Given a finite morphism $\pi:Y\to X$ we
	always equip $Y$ with the polarization $\mathcal{O}_Y(1)=\pi^{\ast}\mathcal{O}_X(1)$.
	By (semi)stability we mean $\mu$-(semi)stability
        with respect to $\mathcal{O}_X(1)$.
	
	We denote the moduli space of (semi)stable 
        vector bundles of rank $r$ and degree $d$ on
	a smooth projective 
        curve $C$ by $M^{s,r,d}_C$ (resp. $M^{ss,r,d}_C$).
	On a projective variety $X$ the stable vector bundles with Hilbert             polynomial $P$ 
	form an open $M^{s,P}_X$ in the moduli space of 
        Gieseker semistable sheaves $M^{G-ss,P}_X$.
		
	Given a morphism $\pi:Y\to X$ of varieties and
        a sheaf $F$ on $X$ we denote the pullback
	$\pi^{\ast}F$ also by $F_{\mid Y}$.
	
	By a Galois morphism $Y\to X$ of varieties we mean a finite
        surjective separable morphism
	such that the extension of function fields is Galois.
	A (Galois) cover is a finite \'etale (Galois) morphism $Y\to X$.
        A cyclic Galois cover is a Galois cover with cyclic Galois group.

        We call a Galois cover $\pi:Y\to X$ prime to $p$ if $p\nmid \deg(\pi)$.
\subsection*{Acknowledgements}

The author would like to thank his PhD advisor Georg Hein for his support and many inspiring and fruitful discussions.
He would also like to thank his second advisor Jochen Heinloth for some helpful discussions
and suggesting the notation $X_{r,split}$.
Thanks also go to Stefan Reppen for pointing out the literature \cite{dm} and a discussion on \'etale trivializable bundles.
The author was funded by the DFG Graduiertenkolleg 2553.
This paper is part of the author's PhD thesis carried out at the University of Duisburg-Essen.

\section{Stratifying by decomposition type}

We define a stratification associated 
to the decomposition type of 
a stable vector bundle with respect to a fixed Galois cover $Y\to X$ of a normal projective variety.
We estimate the dimension of the strata in the curve case refining 
the result \cite[Lemma 4.4]{fun}.

For the convenience of the reader we recall some properties 
of semi\-stable vector bundles under pushforward and pullback:

\begin{lemma}[Lemma 3.2.1-3.2.3, \cite{hl}, see also Lemma 3.2 \cite{fun}]
	\label{lemma-basic}
	Let $\pi:Y\to X$ be a finite separable morphism of normal projective
	varieties of degree $d$.
	Let $V$ be a vector bundle on $X$ and $W$ be a vector bundle on $Y$.
        Then we have the following:
	\begin{itemize}
		\item $\mu(W)=d(\mu(\pi_{\ast}W)-\mu(\pi_{\ast}\mathcal{O}_Y))$.
		\item $\mu(V_{\mid Y})=d\mu(V)$.
            \item $V$ is semistable iff $V_{\mid Y}$ is semistable.  
            \item If $V_{\mid Y}$ is stable, then so is $V$.
        \end{itemize}
        If in addition $\pi:Y\to X$ is Galois, then we have:
        \begin{itemize}
		\item If $V$ is polystable, then $V_{\mid Y}$ is polystable.
            \item If $\pi$ is prime to $p$,
			then $V$ is polystable iff $V_{\mid Y}$ is polystable.
	\end{itemize}
\end{lemma}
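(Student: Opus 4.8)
The plan is to treat the two numerical identities first and then bootstrap from semistability up to the finer polystability statements, exploiting that over a curve (the main case of interest) normality means smoothness, so $\pi$ is flat and $\pi^{\ast}$ is exact; in the general normal case one argues with saturated subsheaves, since $\mu$ is computed in codimension one where $\pi$ is automatically flat. The second identity is immediate: pullback preserves rank, and since $\mathcal{O}_Y(1)=\pi^{\ast}\mathcal{O}_X(1)$ and $c_1(\pi^{\ast}V)=\pi^{\ast}c_1(V)$, the projection formula together with $\pi_{\ast}[Y]=d[X]$ gives $\deg(V_{\mid Y})=d\deg(V)$, hence $\mu(V_{\mid Y})=d\mu(V)$. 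For the first identity I would compare Euler characteristics: as $\pi$ is finite there are no higher direct images, so $\chi(Y,W)=\chi(X,\pi_{\ast}W)$, and $\rk(\pi_{\ast}W)=d\,\rk(W)$. Running Grothendieck--Riemann--Roch (Riemann--Roch on a curve) for $W$ and for $W=\mathcal{O}_Y$ and subtracting isolates $\deg(\pi_{\ast}W)=\deg(W)+\rk(W)\deg(\pi_{\ast}\mathcal{O}_Y)$; dividing by $\rk(W)$ and rewriting the two right-hand terms as $d\mu(\pi_{\ast}W)$ and $d\mu(\pi_{\ast}\mathcal{O}_Y)$, using $\rk(\pi_{\ast}W)=d\,\rk(W)$ and $\rk(\pi_{\ast}\mathcal{O}_Y)=d$, yields the claim.

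For (semi)stability, the reverse direction of the third bullet and the whole fourth bullet go together: given a proper subsheaf $F\subsetneq V$, flatness makes $F_{\mid Y}\hookrightarrow V_{\mid Y}$ a subsheaf of the same rank as $F$, and by the second identity $\mu(F_{\mid Y})=d\,\mu(F)$. Since $d>0$, a strict (resp.\ non-strict) slope inequality for $F$ transfers verbatim to $F_{\mid Y}$, so by contraposition (semi)stability of $V_{\mid Y}$ forces that of $V$. The substantive direction is that $V$ semistable implies $V_{\mid Y}$ semistable, which I would prove first for $\pi$ Galois with group $G$: if $\pi^{\ast}V$ were unstable, its maximal destabilizing (first Harder--Narasimhan) subsheaf is canonical, hence $G$-invariant, and by \'etale descent it descends to a subsheaf of $V$ of the same slope, contradicting semistability. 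The general separable case then follows by passing to the Galois closure $\tilde{Y}\to X$, applying the Galois case there, and invoking the reverse direction already proved for the finite map $\tilde{Y}\to Y$.

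For polystability I reduce the fifth bullet to $V$ stable, since a polystable bundle is a finite direct sum of stable bundles of one slope and pullback commutes with direct sums. Then $\pi^{\ast}V$ is semistable by the previous step, and its socle --- the maximal polystable subsheaf of slope $\mu(\pi^{\ast}V)$ --- is canonical, hence $G$-invariant, so descends to a subsheaf $S_0\subseteq V$ with $\mu(S_0)=\mu(V)$; stability of $V$ forces $S_0=V$, i.e.\ $\pi^{\ast}V$ equals its own socle and is polystable. For the last bullet the forward implication is the fifth bullet. For the converse, note first that $\pi^{\ast}V$ polystable gives $V$ semistable by the reverse direction of the third bullet; next, $p\nmid d$ makes the normalized trace $\tfrac1d\tr\colon\pi_{\ast}\mathcal{O}_Y\to\mathcal{O}_X$ split the unit $\mathcal{O}_X\hookrightarrow\pi_{\ast}\mathcal{O}_Y$, so tensoring with $V$ and using $\pi_{\ast}\pi^{\ast}V\cong V\otimes\pi_{\ast}\mathcal{O}_Y$ exhibits $V$ as a direct summand of $\pi_{\ast}\pi^{\ast}V$. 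Since $\pi^{\ast}V$ is polystable, so is $\pi_{\ast}\pi^{\ast}V$ (pushforward of a polystable bundle along a Galois cover is polystable, again by the socle/descent mechanism applied on $X$), and a direct summand of a polystable bundle of the same slope is polystable, giving the claim.

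The main obstacle is precisely the implication ``$V$ semistable $\Rightarrow V_{\mid Y}$ semistable'' and its polystable refinement: the naive route through the projection formula runs into the failure of tensor products of semistable bundles to remain semistable in characteristic $p$, which is exactly why I route the argument through canonicity of the Harder--Narasimhan filtration and of the socle together with Galois descent, and why the prime-to-$p$ hypothesis (hence an honest trace splitting) is needed for the converse in the last bullet. All of these statements are classical; for a fully detailed treatment one may cite \cite[Lemmas 3.2.1--3.2.3]{hl}.
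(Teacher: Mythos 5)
The paper offers no proof of this lemma at all --- it is quoted verbatim from \cite[Lemmas 3.2.1--3.2.3]{hl} and \cite[Lemma 3.2]{fun} --- so there is no in-paper argument to compare against; your write-up is essentially the standard proof those references give. The slope identities via Riemann--Roch and $\chi(Y,W)=\chi(X,\pi_{\ast}W)$, the descent of the Harder--Narasimhan maximal destabilizer (resp.\ the socle) along a Galois morphism followed by passage to the Galois closure, and the reduction of polystability to the stable case are all correct and are exactly the intended route. Two small imprecisions: the lemma allows ramified separable morphisms, so ``\'etale descent'' of the $G$-invariant destabilizer is not literally available --- one should instead pass to $(\pi_{\ast}F)^G\subseteq V$ (or its saturation) and compare slopes generically, which is what \cite{hl} does; and the slope comparison for subsheaves should be phrased in codimension one, as you indicate.

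The one step I would not accept as written is the converse in the last bullet. You route it through the claim that ``pushforward of a polystable bundle along a Galois cover is polystable, again by the socle/descent mechanism applied on $X$.'' First, as a blanket statement this is false without the prime-to-$p$ hypothesis: for an Artin--Schreier $\mathbf{Z}/p\mathbf{Z}$-cover in characteristic $p$, $\pi_{\ast}\mathcal{O}_Y$ is an indecomposable iterated self-extension of $\mathcal{O}_X$, hence semistable but not polystable. Second, even with $p\nmid d$ the justification does not parse: there is no Galois action on $X$ to which the ``canonical, hence invariant, hence descends'' mechanism could apply, and proving that $\pi_{\ast}(\text{polystable})$ is polystable is essentially equivalent to the statement you are trying to prove (one would again need to split off the socle). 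The standard non-circular argument applies the averaging directly: let $S\subseteq V$ be the socle; by the fifth bullet $\pi^{\ast}S$ is polystable of slope $\mu(\pi^{\ast}V)$, hence a direct summand of the polystable sheaf $\pi^{\ast}V$, so there is a retraction $\pi^{\ast}V\to\pi^{\ast}S$; averaging it over $G$ (using $p\nmid \#G$) produces a $G$-equivariant retraction, which descends to a splitting $V\cong S\oplus T$; since every nonzero semistable sheaf has nonzero socle and the socle of a direct sum is the direct sum of the socles, $T=0$ and $V=S$ is polystable. With that replacement the proof is complete.
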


In fact, more can be said for the pullback of a stable vector bundle under a Galois cover.
The following is the key observation for defining decomposition strata:
\begin{lemma}[Lemma 3.8, \cite{fun}]
\label{lemma-key}
    Let $Y\to X$ be a Galois cover of a normal projective variety $X$.
    Then a stable vector bundle $V$ on $X$ decomposes on $Y$ as a direct sum 
    \[
        V_{\mid Y}\cong \bigoplus_{i=1}^n W_i^{\oplus e}
    \]
    of pairwise non-isomorphic stable vector bundles $W_i$
    such that the Galois group acts transitively on the isomorphism classes of the $W_i$.
\end{lemma}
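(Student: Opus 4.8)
The plan is to combine the polystability of the pullback with an analysis of the endomorphism algebra of $V_{\mid Y}$ as a representation of the Galois group $G$. First I would invoke Lemma \ref{lemma-basic}: a stable bundle is in particular polystable, so $V_{\mid Y}$ is polystable and hence splits as a direct sum of stable subbundles, all of the common slope $\deg(\pi)\mu(V)$. Grouping isomorphic summands yields $V_{\mid Y}\cong\bigoplus_{i=1}^n W_i^{\oplus a_i}$ with the $W_i$ pairwise non-isomorphic and stable. It then remains to produce the $G$-action on the $\{W_i\}$, to show it is transitive, and to see that the multiplicities $a_i$ are all equal.

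Next I would set up the action. For a deck transformation $g\in G$ we have $\pi\circ g=\pi$, so $g^{\ast}V_{\mid Y}=(\pi\circ g)^{\ast}V\cong V_{\mid Y}$ canonically, and since $g^{\ast}\mathcal{O}_Y(1)=\mathcal{O}_Y(1)$ the autoequivalence $g^{\ast}$ preserves slopes and stability. Thus $g^{\ast}W_i$ is again a stable summand of $V_{\mid Y}$ of the same slope, so $g^{\ast}W_i\cong W_{\sigma_g(i)}$ for a permutation $\sigma_g$ of $\{1,\dots,n\}$, and $g\mapsto\sigma_g$ defines the action of $G$ on the set of isomorphism classes. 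Comparing the $W_j$-multiplicities in $g^{\ast}V_{\mid Y}\cong V_{\mid Y}$ gives $a_{\sigma_g(i)}=a_i$, so the multiplicities are constant along $G$-orbits.

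The crux is transitivity, which I would extract from the endomorphism algebra $A=\End(V_{\mid Y})$. Because the $W_i$ are stable of a common slope, a nonzero map between distinct ones would be an isomorphism, so $\Hom(W_i,W_j)=\delta_{ij}\,k$ and therefore $A\cong\prod_{i=1}^n \mathrm{Mat}_{a_i}(k)$, with center $Z(A)=\bigoplus_{i=1}^n k\,e_i$ where $e_i$ is the projector onto $W_i^{\oplus a_i}$. By the previous step $g^{\ast}$ acts on $A$ by algebra automorphisms sending $e_i$ to $e_{\sigma_g(i)}$, so $\dim_k Z(A)^G$ equals the number of $G$-orbits on $\{1,\dots,n\}$. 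On the other hand $V$ is simple, and since $(\pi_{\ast}\mathcal{O}_Y)^G=\mathcal{O}_X$ for a Galois cover one obtains $A^G=H^0(Y,\End(V_{\mid Y}))^G=H^0(X,\End(V))=\End(V)=k$. As $Z(A)^G\subseteq A^G=k$ is one-dimensional, there is a single orbit, that is, the $G$-action on the $W_i$ is transitive; combined with the constancy of multiplicities along orbits this forces all $a_i$ to equal a common value $e$, giving the stated form.

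The main obstacle is exactly this transitivity: unlike the polystable splitting and the mere existence of the permutation action, it genuinely uses the simplicity of $V$ together with the descent identity $A^G=\End(V)$, packaged through the center of the semisimple algebra $A$. I expect the only delicate point to be justifying $A^G=\End(V)$ cleanly, for which the Galois identity $(\pi_{\ast}\mathcal{O}_Y)^G=\mathcal{O}_X$ and the compatibility of $G$-invariants with global sections under the finite morphism $\pi$ are the key inputs.
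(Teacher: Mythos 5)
This lemma is imported from \cite[Lemma 3.8]{fun}; the present paper gives no proof of it, so there is nothing in-text to compare against line by line. Your argument is correct as it stands: polystability of $V_{\mid Y}$ from Lemma \ref{lemma-basic}, the permutation action of $G$ on the isotypic pieces (using that the polarization on $Y$ is $\pi^{\ast}\mathcal{O}_X(1)$ and hence $G$-invariant), and then transitivity read off from $\dim_k Z(A)^G$, where $A=\End(V_{\mid Y})\cong\prod_i\mathrm{Mat}_{a_i}(k)$ and $A^G=\End(V)=k$ via the projection formula and $(\pi_{\ast}\mathcal{O}_Y)^G=\mathcal{O}_X$ (which needs normality of $X$, available here). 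The identification $A^G=\End(V)$ only uses left-exactness of invariants, so it is characteristic-free; that is the one point you flagged as delicate, and it does go through. Your route differs from the standard one used elsewhere in this paper and, presumably, in the cited source: there one takes a single stable summand $W_1$, forms the image of $\bigoplus_{\sigma\in G}\sigma^{\ast}W_1\to V_{\mid Y}$, observes that this is a $G$-invariant saturated subsheaf and hence descends to a subsheaf of $V$ of the same slope, and concludes by stability of $V$ that it is all of $V_{\mid Y}$ --- exactly the technique redeployed in the proof of Theorem \ref{theorem-dimension-estimate}(i). The descent argument is more geometric and produces the invariant subsheaf that later proofs reuse; your endomorphism-algebra argument is a clean piece of Clifford theory that packages transitivity and the equality of multiplicities in one step, at the cost of needing the semisimplicity of $A$ and the descent identity $A^G=\End(V)$. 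Both are valid; yours is a genuinely different and self-contained proof.
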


\begin{definition}
    Let $\pi:Y\to X$ be a Galois cover of a normal projective variety $X$.
    Let $V$ be a stable vector bundle of rank $r$ on $X$.
    The \emph{decomposition type of $V$ with respect to $\pi$}
    is the rank $m$ of the bundles $W_i$ in the decomposition
        $V_{\mid Y}\cong \bigoplus_{i=1}^n W_i^{\oplus e}$
    of Lemma \ref{lemma-key}.
    The \emph{refined decomposition type of $V$ with respect to $\pi$}
    is the tuple $(n,e)$. 
    
    Note that the refined decomposition type recovers the
    decomposition type as $mne=r$.

    Let $P\in\mathbb{Q}[x]$ be a polynomial.
    Assume that the moduli space  $M^{s,P}_X$ of stable vector bundles with Hilbert polynomial $P$
    is non-empty. The Hilbert polynomial determines the rank of the vector 
    bundles in $M^{s,P}_X$ which we denote by $r$.
    
    For integers $m,n,e,\geq 1$ such that $mne=r$ we define the 
    \emph{refined decomposition strata with respect to $\pi$}
    \[
        Z^{s,P}(n,e,\pi) := 
        \left\{ 
            V\in M^{s,P}_X 
        \middle \vert
        \begin{array}{l}
            V \text{ has refined decomposition}\\
            \text{type } (n,e) \text{ with respect to } \pi
        \end{array}
        \right\}, \text{ and}
    \]
    \[
        Z^{s,P}(\cdot \mid n, e\mid \cdot,\pi):=
        \bigsqcup_{(n',e')}Z^{s,P}(n',e',\pi),
    \]
    where the union is taken over $n',e'\geq 1$ such that $n'e'=ne$
    and $n'\mid n$.
    The \emph{decomposition strata with respect to $\pi$} are defined as
    \[
        Z^{s,P}(m,\pi) := 
        \left\{
            V \in M^{s,P}_X 
        \middle \vert
            \begin{array}{l}
                V \text{ has decomposition type } m\\
                \quad \quad \text{ with respect to } \pi
            \end{array} 
        \right\}, \text{ and}
    \]
    \[
        Z^{s,P}(\cdot \mid m,\pi) := \bigsqcup_{m'\mid m}Z^{s,P}(m',\pi).
    \]
    If $X$ is a smooth projective curve, then the Hilbert polynomial is determined by the
    rank $r$ and degree $d$ and we use the notations
    \[
        Z^{s,r,d}(m,\pi), Z^{s,r,d}(\cdot \mid m,\pi), Z^{s,r,d}(n,e,\pi), \text{ and } Z^{s,r,d}(\cdot \mid n, e\mid \cdot,\pi).
    \]
\end{definition}

We show that $Z^{s,P}(m,\pi)$ and $Z^{s,P}(n,e,\pi)$ 
form a stratification:

\begin{lemma}
\label{lemma-strata-closed}
    Let $\pi:Y\to X$ be a Galois cover of a normal variety $X$.
    Let $P\in\mathbb{Q}[x]$ be a polynomial.
    Assume that $M^{s,P}_X$ is non-empty and denote by $r$ the rank of the vector bundles in $M^{s,P}_X$.
    Then we have the following:
    \begin{enumerate}[(i)]
    \item $Z^{s,P}(\cdot \mid m, \pi)\subseteq M^{s,P}_X$ is closed for $m\mid r$.
    \item $Z^{s,P}(\cdot \mid n, e\mid \cdot ,\pi)\subseteq Z^{s,P}(m,\pi)$ 
    is closed for $m,n,e\geq 1$ such that $mne=r$.
    \end{enumerate}
\end{lemma}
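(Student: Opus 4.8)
\emph{Strategy.} Since $M^{s,P}_X$ is of finite type, it suffices to show that each of the two sets is constructible and stable under specialization. Constructibility is automatic: the refined decomposition type is determined by the isomorphism type of the polystable bundle $(\mathcal V_s)_{\mid Y}$ attached to a family $\mathcal V$ of stable bundles, and the number and multiplicities of the stable summands vary constructibly, so each stratum $Z^{s,P}(n,e,\pi)$ and hence every finite union of them is constructible. Recall also that $\End((\mathcal V_s)_{\mid Y})=H^0(Y,\SheafHom((\mathcal V_s)_{\mid Y},(\mathcal V_s)_{\mid Y}))$ has dimension $ne^2$ by Lemma \ref{lemma-key}, an upper semicontinuous function of $s$. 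To check stability under specialization we may pass to the spectrum $S$ of a discrete valuation ring with generic point $t$ and closed point $0$, carrying a family $\mathcal V$ of stable bundles with $V_0$ specializing $V_t$. Everything then reduces to the following divisibility statement, which is the heart of the matter: \emph{one always has $m_0\mid m_t$, and if $m_0=m_t$ then $n_0\mid n_t$}, where $(m_s,n_s,e_s)$ denotes the (refined) decomposition type of $V_s$.

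\emph{The divisibility $n_0\mid n_t$ when $m_0=m_t$.} Pulling $\mathcal V$ back along $\pi\times\id_S$ gives a family $\mathcal W=\mathcal V_{\mid Y\times S}$ of polystable bundles on $Y$ with a fibrewise action of the Galois group $G$. The finite $\mathcal O_S$-algebra $p_\ast\SheafHom(\mathcal W,\mathcal W)$, with $p\colon Y\times S\to S$, has generic fibre $\End((V_t)_{\mid Y})\cong\prod_{i=1}^{n_t}\mathrm{Mat}_{e_t}(k)$ and special fibre a degeneration of it inside $\End((V_0)_{\mid Y})\cong\prod_{j=1}^{n_0}\mathrm{Mat}_{e_0}(k)$. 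By Lemma \ref{lemma-key} the minimal central idempotents correspond to the distinct stable summands and are permuted transitively by $G$, so they form a $G$-equivariant family of finite sets over $S$ whose fibres are the transitive $G$-sets $G/H_t$ and $G/H_0$, with $H_s$ the stabilizer of a summand class. In the limit several generic idempotents sum to one special idempotent, and this $G$-equivariantly partitions the generic summands into $n_0$ groups; it thus yields a $G$-equivariant surjection $G/H_t\twoheadrightarrow G/H_0$, i.e. $H_t$ is subconjugate to $H_0$. Every such map has all fibres of cardinality $[H_0:H_t]$, whence $n_0=[G:H_0]$ divides $n_t=[G:H_t]$.

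\emph{The divisibility $m_0\mid m_t$.} Again by Lemma \ref{lemma-key}, all stable summands of $(V_t)_{\mid Y}$ have the same rank $m_t$ and all those of $(V_0)_{\mid Y}$ have the same rank $m_0$. A stable summand of the generic fibre degenerates to a semistable bundle of rank $m_t$ whose Jordan--H\"older factors lie among the stable summands $U_j$ of $(V_0)_{\mid Y}$, since its S-equivalence class specializes within that of $(V_0)_{\mid Y}$. As each $U_j$ has rank $m_0$, the integer $m_t$ is a sum of copies of $m_0$, so $m_0\mid m_t$.

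\emph{Conclusion.} For (i) we have $Z^{s,P}(\cdot\mid m,\pi)=\{V:m(V)\mid m\}$; if $V_0$ specializes $V_t$ with $m_t\mid m$, then $m_0\mid m_t\mid m$, so the set is stable under specialization and hence closed. For (ii) we work inside $Z^{s,P}(m,\pi)$, so $m_0=m_t=m$ and $n_te_t=n_0e_0=r/m$; here the condition $n'\mid n$ with $n'e'=ne$ is equivalent to $e\mid e'$, so $Z^{s,P}(\cdot\mid n,e\mid\cdot,\pi)=\{V\in Z^{s,P}(m,\pi):n(V)\mid n\}$, and the divisibility $n_0\mid n_t\mid n$ again gives stability under specialization, hence closedness in $Z^{s,P}(m,\pi)$. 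The main obstacle is the divisibility statement, and within it the step that extracts a $G$-equivariant surjection of summand orbits from the degeneration of the relative endomorphism algebra; once the Galois permutation action on the stable summands is shown to degenerate equivariantly, the group-theoretic conclusion $n_0\mid n_t$ and the rank bookkeeping giving $m_0\mid m_t$ are routine.
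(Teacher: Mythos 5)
Your overall strategy (constructibility plus stability under specialization over a DVR) is genuinely different from the paper's, and your reduction to the divisibility statement ($m_0\mid m_t$, and $n_0\mid n_t$ when $m_0=m_t$) does target the right assertion. The $m_0\mid m_t$ step can be made rigorous using properness and separatedness of the moduli space of Gieseker-semistable sheaves on $Y$. However, the step you yourself identify as the heart of the matter --- extracting a $G$-equivariant surjection $G/H_t\twoheadrightarrow G/H_0$ from the degeneration of the relative endomorphism algebra --- is not proved, and the mechanism you propose fails as stated. The finite $\mathcal{O}_S$-algebra $A=p_\ast\SheafHom(\mathcal W,\mathcal W)$ has centre which is merely an order in $K^{n_t}$ ($K$ the fraction field), and the minimal idempotents of the generic fibre need not lie in $A$ at all: for instance $\{(a,b)\in\mathcal{O}_S^2 : a\equiv b \bmod \mathfrak m\}$ is an order in $K^2$ with connected spectrum and no nontrivial idempotents. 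So there is no ``$G$-equivariant family of finite sets over $S$'' interpolating between the two sets of summand classes. Moreover, the special fibre $A\otimes k(0)$ is in general only a proper subalgebra of $\End((V_0)_{\mid Y})$ (cohomology and base change can fail along the closed point), so even if one had idempotents in the special fibre of $A$, relating them to the summands $U_j$ of $(V_0)_{\mid Y}$ requires a further argument. The claimed partition of the generic summands into $n_0$ groups and its $G$-equivariance are therefore unestablished; your closing sentence (``once the Galois permutation action \dots is shown to degenerate equivariantly'') concedes exactly this. A secondary gap: constructibility of the strata is asserted as ``automatic'' but never argued, and your strategy collapses without it.

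The missing step can be repaired by working in the moduli space rather than in the endomorphism algebra: each stable summand $W_i$ of $(V_t)_{\mid Y}$ is a $K$-point of the projective moduli space $M^{G-ss,\frac{m}{r}P_j}_Y$, hence extends to an $\mathcal{O}_S$-point with a well-defined limit S-equivalence class; by separatedness the direct sum of these limits is S-equivalent to $(V_0)_{\mid Y}$, and when $m_0=m_t$ each limit is a single $U_{j(i)}$, which gives the desired $G$-equivariant map of transitive $G$-sets. This is essentially what the paper's proof packages far more efficiently: on each connected component it writes $Z^{s,P}(\cdot\mid m,\pi)$ (resp.\ $Z^{s,P}(\cdot\mid n,e\mid\cdot,\pi)$ inside $Z^{s,P}(m,\pi)$) as the preimage under the pullback morphism $\pi_j^{\ast}$ of the image of the projective direct-sum morphism $\varphi_{n,1}$ (resp.\ $\varphi_{n,e}$), which is closed because it is the image of a morphism of projective schemes; identifying that preimage with the stratum then uses only the uniqueness of Jordan--H\"older factors. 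No DVR, no constructibility argument, and no equivariant degeneration of idempotents is needed.
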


\begin{proof}
    As $M^{s,P}_X$ is quasi-projective, it has only finitely many connected components $C_1,\dots,C_l$.
    On each of these components the Hilbert polynomial $P_j:=P(V_{\mid Y})$ of $V\in C_j$ is independent
    of $V$ as the Euler characteristic is locally constant, see \cite[Corollary, p.50]{av}. 
    
    We claim that for $V\in C_j$ the bundles $W_i$ in the direct sum
    decomposition $V_{\mid Y}\cong \bigoplus_{i=1}^n W_i^{\oplus e}$
    of Lemma \ref{lemma-key} have the same Hilbert-polynomial.
    Indeed, the Galois group $G$ of $Y\to X$
    acts transitively on the isomorphism classes of the $W_i$.
    For $\sigma\in G$ we have $P(\sigma^{\ast}W_i)=P(W_i)$
    as the Hilbert-polynomial is computed with respect to
    $\pi^{\ast}\mathcal{O}_X(1)$ which is invariant under the action of $G$.
    Thus, we have
    $P(W_i)=\frac{m}{r}P_j$
    for $V\in Z^{s,P}(m,\pi)\cap C_j$.
    
    Pulling back along $\pi$ defines a morphism $\pi_j^{\ast}:C_j \to M^{G-ss,P_j}_Y$.
    In $M^{G-ss,P_j}_Y$ the set-theoretic image $\im(\varphi_{n,e})$ of 
    \[
        \varphi_{n,e}:\prod_{i=1}^n M^{G-ss, \frac{m}{r}P_j}_Y\to 
        M^{G-ss,P_j}_Y,(W_1,\dots,W_n)\mapsto \bigoplus_{i=1}^n W_i^{\oplus e}
    \]
    is closed as it is a morphism of projective schemes, 
    where $m,n,e\geq 1$ such that $mne=r$.
    
    We claim that $Z^{s,P}(\cdot \mid m,\pi)\cap C_j$ is the preimage of 
    $\im(\varphi_{n,1})$ under $\pi_j^{\ast}$.
    Indeed, $V\in Z^{s,P}(\cdot \mid m,\pi)\cap C_j$ clearly lies in the preimage.
    If $V_{\mid Y}\in \im(\varphi_{n,1})$ for some $V\in C_j$ and $n$ such that $mn=r$,
    then $V_{\mid Y}$ is $S$-equivalent to $\bigoplus_{i=1}^n W_i$ 
    for some Gieseker-semistable sheaves $W_i$ of rank $m$.
    As $V_{\mid Y}$ is a direct sum of stable vector bundles with the 
    same Hilbert polynomial, it is Gieseker-polystable. The associated graded object
    of the JH-filtration is unique and thus
    $V$ has decomposition type $m'$ with respect to $\pi$ 
    for some $m'\mid m$.
    
    Then (i) follows from 
    $Z^{s,P}(\cdot \mid m,\pi)=\bigsqcup_{j=1}^{l}Z^{s,P}(\cdot \mid m,\pi)\cap C_j$.

    Similarly, (ii) is obtained from 
    \[
        (\pi_j^{\ast})^{-1}(\im(\varphi_{n,e}))\cap Z^{s,P}(m,\pi)=Z^{s,P}(\cdot \mid n ,e\mid \cdot,\pi)\cap C_j.
    \]
\end{proof}

\begin{remark}
    Ordering $Z^{s,P}(m,\pi), m\mid r,$ via division, we obtain
    a stratification of $M^{s,P}_X$ by $Z^{s,P}(m,\pi)$.
    Furthermore, the refined decomposition strata $Z^{m,P}(n,e,\pi)$ 
    stratify the
    decomposition stratum $Z^{s,P}(m,\pi)$ for $mne=r$ if we order them via 
    \[
        Z^{s,P}(n', e',\pi)\leq Z^{s,P}(n,e,\pi):\Leftrightarrow n'\mid n.
    \]
\end{remark}

\subsection{Dimension estimates}

Consider a Galois cover $\pi:D\to C$ of a smooth projective curve with $C$ Galois group $G$.
To estimate the dimension of the (refined) 
decomposition strata with respect to $\pi$
we show that the decomposition of the pullback $V_{\mid D}\cong \bigoplus_{i=1}^n W_i^{\oplus e}$ of Lemma \ref{lemma-key}
can essentially be recovered from $W_1$.
Furthermore, $W_1$ behaves for the dimension estimates as if it descends to an intermediate
cover $D'\to C$ of degree $n$.

Consider the case $n=1$.
Then there is only one isomorphism class of the conjugates of $W_1$, i.e., $W_1$ is $G$-invariant.
This does not necessarily mean that $W_1$ descends to $C$. 
However, it does up to a twist by a line bundle, see \cite[Lemma 4.3]{fun}.
The following is a generalization of the estimate obtained in \cite[Lemma 4.4]{fun}
and uses the same techniques in the proof.

\begin{theorem}
    \label{theorem-dimension-estimate}
	Let $\pi:D\to C$ be a Galois cover of a smooth projective curve $C$ of
	genus $g_C \geq 2$.
	Let $r\geq 2$ and $d\in \mathbf{Z}$.
	Let $m,n,e\geq 1$ such that $mne=r$.
        Let $r=r'$ (resp. $m'$) be the prime to $p$ part of $r$ (resp. $m$).
	Then we have the following:
        \begin{enumerate}[(i)]
	    \item $\dim(Z^{s,r,d}(n,e,\pi))\leq nm^2(g_C-1)+1$.
            \item $\dim(Z^{s,r,d}(m,\pi))\leq rm(g_C-1)+1$.
	    \item If $\pi$ is a prime to $p$ cover, then 
                \[
                    \dim(Z^{s,r,d}(m,\pi))\leq \frac{r'}{m'}m^2(g_C-1)+1.
                \]
            \item If $\pi$ is a prime to $p$ cover and $r=p^l,l\geq 1$, then $Z^{s,r,d}(n,e,\pi)$ is empty for $n\geq 2$. 
        \end{enumerate}
\end{theorem}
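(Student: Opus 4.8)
The plan is to prove (i) directly and then read off (ii)--(iv) from it by elementary combinatorics.

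For (i), fix $V\in Z^{s,r,d}(n,e,\pi)$ and write $V_{\mid D}\cong\bigoplus_{i=1}^n W_i^{\oplus e}$ as in Lemma \ref{lemma-key}. Let $H\leq G$ be the stabilizer of the isomorphism class $[W_1]$; since $G$ permutes the $n$ classes $[W_i]$ transitively, $[G:H]=n$ and $W_1$ is $H$-invariant. Set $D'=D/H$ and factor $\pi=p'\circ q$ with $q\colon D\to D'$ and $p'\colon D'\to C$; since $\pi$ is \etale{} and $G$ acts freely on $D$, the map $q$ is \etale{} Galois with group $H$ and $p'$ is \etale{} of degree $n$. By Lemma \ref{lemma-basic} the degree $\delta:=\deg(W_1)=\deg(\pi)\,d/(ne)$ is constant on the stratum. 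The key point is that $V$ is recovered from $W_1$: the multiset $\{[W_i]\}_i$ is the $G$-orbit of $[W_1]$ and hence determines $V_{\mid D}$, while $V$ is the descent of $V_{\mid D}$ along $\pi$, unique up to finite ambiguity. Thus $V\mapsto[W_1]$ has finite fibres, and it suffices to bound the dimension of the locus of $H$-invariant stable bundles of rank $m$ and degree $\delta$ on $D$.

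To bound that locus I would descend along $q$. Since $W_1$ is $H$-invariant and stable, \cite[Lemma 4.3]{fun} provides a stable bundle $U$ of rank $m$ on $D'$ and a line bundle $L$ on $D$ with $W_1\cong q^{\ast}U\otimes L$. The surjection $(U,L)\mapsto q^{\ast}U\otimes L$ onto the locus of such $W_1$ then bounds its dimension by $m^2(g_{D'}-1)+1$ (the dimension of the moduli space of stable rank-$m$ bundles on $D'$, independent of the degree), \emph{provided} $L$ contributes no moduli. As $p'$ is \etale{} of degree $n$, Riemann--Hurwitz gives $g_{D'}-1=n(g_C-1)$, so this bound equals $nm^2(g_C-1)+1$; combined with the finite-to-one correspondence this proves (i).

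The derivations of (ii)--(iv) are combinatorial. Since $Z^{s,r,d}(m,\pi)=\bigsqcup_{ne=r/m}Z^{s,r,d}(n,e,\pi)$, part (i) gives $\dim(Z^{s,r,d}(m,\pi))\leq\max_{n\mid(r/m)}\bigl(nm^2(g_C-1)+1\bigr)$. For (ii) the largest admissible $n$ is $r/m$, giving $rm(g_C-1)+1$. For (iii) and (iv) note that $n=[G:H]$ divides $|G|=\deg(\pi)$; if $\pi$ is prime to $p$ then $p\nmid n$, so from $n\mid(r/m)$ we get $n\mid(r/m)'=r'/m'$, which yields (iii). If moreover $r=p^l$ then $r/m$ is a power of $p$, forcing $n=1$ and hence the emptiness asserted in (iv).

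The main obstacle is the claim that the twisting line bundle $L$ contributes no dimensions: a priori $L$ ranges over all of $\Pic(D)$, which would inflate the estimate by $g_D$. The essential input from \cite[Lemma 4.3]{fun} is that $L$ may be taken torsion of order dividing $|H|$ modulo pullbacks from $D'$, the latter absorbed into $U$ via $q^{\ast}U\otimes q^{\ast}L_0=q^{\ast}(U\otimes L_0)$, so that $L$ ranges over a finite set. A secondary technical point is that $V\mapsto[W_1]$ is not literally a morphism of schemes, since splitting a polystable bundle into its stable summands is not algebraic in families; I would therefore phrase the estimate as a dimension bound along a quasi-finite correspondence, working over the connected components and with families as in the proof of Lemma \ref{lemma-strata-closed}, rather than as an honest morphism to the moduli space on $D'$.
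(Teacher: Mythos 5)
Your argument is correct and follows essentially the same route as the paper's proof: pass to the stabilizer $H$ of $[W_1]$ and the intermediate cover $D'=D/H$, descend $W_1$ up to a line-bundle twist via \cite[Lemma 4.3]{fun}, bound the resulting parameter space by $m^2(g_{D'}-1)+1$, convert via Riemann--Hurwitz and the finiteness of $\pi^{\ast}$, and deduce (ii)--(iv) from the divisibility constraints $n\mid r/m$ and $n\mid\deg(\pi)$. The only difference is bookkeeping at the twist: the paper fixes the determinant of the descended bundle and lets $L$ range over a $g_{D'}$-dimensional family of line bundles whose $me$-th power descends, whereas you let the determinant vary and argue $L$ is finite modulo pullbacks from $D'$ (which is provable, since $\sigma^{\ast}L\otimes L^{-1}$ is $m$-torsion for $\sigma\in H$ and $\Pic(D)^H/q^{\ast}\Pic(D')$ is finite) --- both yield the same bound $m^2(g_{D'}-1)+1$.
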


\begin{proof}
    Let $G$ denote the Galois group of $D\to C$.
    Note that the decomposition strata are locally closed in $M^{ss,r,d}_C$, see Lemma \ref{lemma-strata-closed}. 
    Thus, the dimension of $Z^{s,r,d}(n,e,\pi)$ is the same as the dimension of its closure
    in $M^{ss,r,d}_C$.
    
    (i): Consider $V\in Z^{s,r,d}(n,e,\pi)$. Then
    we have $V_{\mid D}\cong \bigoplus_{i=1}^n W_i^{\oplus e}$ for
    some pairwise non-isomorphic stable vector bundles $W_i$ of rank $m$.
    Let $H$ be the stabilizer of the isomorphism class of $W_1$.
    Then $\pi':D':=D/H\to C$ is an intermediate cover of $D\to C$ of degree $n$ over $C$ as $G$ acts transitively on the isomorphism classes of the $W_i$.

    Fix an inclusion $\iota:W_1^{\oplus e}\to V_{\mid D}$.
    Let $W$ be the image of 
    \[
        \bigoplus_{\sigma\in H}\sigma^{\ast}W_1^{\oplus e}
        \xrightarrow{\oplus \varphi_{\sigma}^{-1}\sigma^{\ast}\iota}V_{\mid D},
    \]
    where $\varphi_{\sigma}:V_{\mid D}\to \sigma^{\ast}V_{\mid D}$ denotes the $G$-linearization associated to $V$.
    Then $W$ is an $H$-invariant saturated subsheaf of $V_{\mid D}$ isomorphic to $W_1^{\oplus e}$. 
    Thus, $W\subseteq V_{\mid D}$ descends to a saturated subsheaf 
    $W'\subseteq V_{\mid D'}$ by \cite[Lemma 3.6]{fun}.

    Since the action of $G$ on the isomorphism classes of the $W_i$ is transitive,
    we obtain that $V_{\mid D}$ is isomorphic to a direct sum of conjugates of $W'_{\mid D}$.
    As $W'_{\mid D}$ is semistable, so is $W'$ by Lemma \ref{lemma-basic}.
    
    By \cite[Lemma 4.3]{fun}, there exists a line bundle
    $L$ on $D$ and a vector bundle $W'_1$ on $D'$ such that
    $(W'_{1})_{\mid D}\cong L\otimes  W_1$.
    Then $W'_1$ is stable.
    Note that $L^{\otimes em}$ descends to $D'$ as
    \[
        \det(W'_1)_{\mid D}\cong L^{\otimes m}\det(W_1) \text{ and }
        \det(W_1)^{\otimes e}\cong \det(W')_{\mid D}.
    \]

    Tensoring $W'_1$ by a line bundle of degree $1$ changes the degree
    of $W'_1$ by $m$ and we can assume that $W'_1$ has degree $d',
    0\leq d'< m$.
    Fixing the degree of $W'_1$ also fixes the degree of $L$.
    Choose a line bundle $L'$ on $D'$ of degree $d', 0\leq d'< m,$
    and denote the moduli space of semistable vector bundles of rank $m$
    and determinant $L'$ on $D'$ by $M^{ss,m}_{L'}$.
    Denote by $P(d')$ the moduli space of line bundles on $D$ of degree 
    $f$ such that their $me$-th power descends to $D'$
    and 
    \[
        rf=d\deg(\pi)-e\deg(\pi)d'.
    \]
   Note that if $P(d')\neq \emptyset$, then $\dim(P(d'))=g_{D'}$ as raising a line bundle 
   to its $me$-th power is a finite morphism $\Pic^f_D \to \Pic^{mef}_D$.
   
    Consider for a finite subset $\Sigma\subseteq G$ of cardinality $n$ the morphism
    \[
        \varphi_{d',\Sigma,D'}:P(d') \times M^{ss,m}_{L'} \to M^{ss, r, d\deg(\pi)}_D,
        (L,W'_1) \mapsto \bigoplus_{\sigma\in \Sigma}\sigma^{\ast}(L\otimes (W'_1)_{\mid D})^{\oplus e}.
    \]
    Observe that 
    the image $Z_{d',\Sigma,D'}$ of $\varphi_{d',\Sigma,D'}$ 
    is closed as $\varphi_{d',\Sigma,D'}$ is a morphism of projective varieties.
    
    The above discussion shows that 
    \[
        \pi^{\ast}(Z^{s,r,d}(n,e,\pi))\subseteq \bigcup_{d'=0}^{m-1}\bigcup_{D'\to C}\bigcup_{\Sigma\subseteq G}Z_{d',\Sigma,D'},
    \]
    where the union is taken over intermediate covers $D\to D'\to C$ of degree $n$ and subsets $\Sigma\subseteq G$ of cardinality $n$.
    
    We can now estimate the dimension of the refined decomposition strata.
    By \cite[Theorem 4.2]{fun}, pullback by $\pi$ is a finite
    morphism $\pi^{\ast}$ and it suffices to estimate $\dim(\pi^{\ast}(Z(n,e,\pi)))$.
    We have
    \[
        \dim(Z_{d',\Sigma,D'})\leq \dim(P(d'))+\dim(M^{ss,m}_{L'})=m^2(g_{D'}-1) + 1.
    \]
    We obtain
    \[
        \dim(Z^{s,r,d}(n,e,\pi))=\dim(\pi^{\ast}(Z^{s,r,d}(n,e,\pi)))\leq m^2n(g_C-1)+1
    \]
    by Riemann-Hurwitz.

    (ii): This is a direct consequence of (i) as
    \[
        Z^{s,r,d}(m,\pi)=\bigsqcup_{(n,e)} Z^{s,r,d}(n,e,\pi),
    \]
    where the union is taken over $n,e\geq 1$ such that $mne=r$.
    Then 
    \[
        \dim(Z^{s,r,d}(m,\pi)) = \max_{n,e}\dim(Z^{s,r,d}(n,e,\pi)) \leq rm(g_C-1)+1
    \]
    as the maximum is obtained at $e=1, n=\frac{r}{m}$.
    
    (iii): If $\pi$ is prime to $p$ and $Z^{s,r,d}(m,\pi)$ is non-empty, then we claim that $n$ is coprime to $p$ as well.
        Indeed, we have seen in (i) that $n\mid \deg(\pi)$.
        Then the maximum in the estimate of (ii)
        is obtained at $e=\frac{rm'}{r'm}, n=\frac{r'}{m'}$.
        Thus, we conclude
        \[
            \dim(Z^{s,r,d}(m,\pi))\leq \frac{r'}{m'}m^2(g_C-1)+1.
        \]

    (iv): If $Z^{s,r,d}(n,e,\pi)$ is non-empty, then
        $n\mid \deg(\pi)$. This is impossible under the assumptions of (iv).
\end{proof}

    \section{Pushforward is finite}
In this section we study the pushforward of stable vector bundles under an \'etale morphism.
We identify the open where the pushforward is stable. Furthermore,
pushforward induces a finite morphism on the level of moduli spaces 
of semistable vector bundles over smooth projective curves.

The following lemma generalizes \cite[Lemma 4.1]{fun}.
    \begin{lemma}
	\label{lemma-etale-pushforward}
	Let $\pi:Y\to X$ be a cover of a normal projective variety $X$ of dimension $\geq 1$.
	Then we have the following:
	\begin{enumerate}[(i)]
            \item $\pi_{\ast}\mathcal{O}_Y$ has slope $0$.
		\item The pushforward $\pi_{\ast}W$ of a 
                semistable vector bundle $W$ on $Y$ is
			semistable of degree $\deg(W)$.
		\item Let $V$ be a semistable vector bundle on $X$.
			Then $\pi_{\ast}\mathcal{O}_Y\otimes V$ is semistable
			of slope $\mu(V)$.
	\end{enumerate}
\end{lemma}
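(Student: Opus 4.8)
The plan is to establish the three claims in order, bootstrapping each from the previous one together with Lemma~\ref{lemma-basic}; write $d=\deg(\pi)$.

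For (i) I would exploit that $\pi$ is \'etale. As $\pi$ is finite, flat and unramified, the trace pairing $\pi_{\ast}\mathcal{O}_Y\otimes_{\mathcal{O}_X}\pi_{\ast}\mathcal{O}_Y\to\mathcal{O}_X$ is perfect, so $\pi_{\ast}\mathcal{O}_Y$ is self-dual as an $\mathcal{O}_X$-module. Taking determinants gives $\det(\pi_{\ast}\mathcal{O}_Y)^{\otimes 2}\cong\mathcal{O}_X$, so $\det(\pi_{\ast}\mathcal{O}_Y)$ is a torsion line bundle and therefore has degree $0$ with respect to $\mathcal{O}_X(1)$; hence $\mu(\pi_{\ast}\mathcal{O}_Y)=0$. (On a curve one may instead argue directly that $\chi(\pi_{\ast}\mathcal{O}_Y)=\chi(\mathcal{O}_Y)=d\,\chi(\mathcal{O}_X)$ since $\pi$ is \'etale, and compare with Riemann--Roch for the rank-$d$ bundle $\pi_{\ast}\mathcal{O}_Y$ to force $\deg(\pi_{\ast}\mathcal{O}_Y)=0$.)

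For the degree assertion in (ii) I would feed (i) into the first bullet of Lemma~\ref{lemma-basic}, which applied to $W$ reads $\mu(W)=d\bigl(\mu(\pi_{\ast}W)-\mu(\pi_{\ast}\mathcal{O}_Y)\bigr)=d\,\mu(\pi_{\ast}W)$. Since $\rk(\pi_{\ast}W)=d\,\rk(W)$, this rearranges to $\deg(\pi_{\ast}W)=\deg(W)$. The semistability in (ii) is the heart of the matter, and here I would reduce to the Galois situation. Let $\rho:\tilde Y\to X$ be the Galois closure, factoring as $\tilde Y\xrightarrow{q}Y\xrightarrow{\pi}X$, with Galois group $G$ and $H=\mathrm{Gal}(\tilde Y/Y)$. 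By the third bullet of Lemma~\ref{lemma-basic} it suffices to prove that $(\pi_{\ast}W)_{\mid\tilde Y}=\rho^{\ast}\pi_{\ast}W$ is semistable. Flat base change along the cartesian square with corners $\tilde Y\times_X Y$, $Y$, $\tilde Y$, $X$ identifies this pullback with $\mathrm{pr}_{1\ast}\mathrm{pr}_2^{\ast}W$. Because $\tilde Y\times_X\tilde Y\cong\bigsqcup_{g\in G}\tilde Y$ (graphs of the $g$) and $Y=\tilde Y/H$, the fiber product $\tilde Y\times_X Y$ is a disjoint union of copies of $\tilde Y$ indexed by the $H$-cosets in $G$, with $\mathrm{pr}_1$ an isomorphism on each copy and $\mathrm{pr}_2$ given by $q\circ g$ on the copy indexed by the coset of $g$. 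Hence $\rho^{\ast}\pi_{\ast}W\cong\bigoplus_{g}g^{\ast}q^{\ast}W$, the sum over coset representatives. Each summand is a pullback of the semistable bundle $W$ along the cover $q\circ g$, hence semistable by Lemma~\ref{lemma-basic}, and all summands share the slope $\mu(q^{\ast}W)$ since every $g\in G$ fixes the polarization $\rho^{\ast}\mathcal{O}_X(1)$. A direct sum of semistable bundles of equal slope is semistable, which yields the claim.

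Finally, (iii) follows formally. By the projection formula $\pi_{\ast}\mathcal{O}_Y\otimes V\cong\pi_{\ast}(\pi^{\ast}V)=\pi_{\ast}(V_{\mid Y})$, and $V_{\mid Y}$ is semistable by the third bullet of Lemma~\ref{lemma-basic}; hence $\pi_{\ast}(V_{\mid Y})$ is semistable by (ii). Its slope is $\mu(V)$, since by (i) one has $\deg(\pi_{\ast}\mathcal{O}_Y\otimes V)=\rk(V)\deg(\pi_{\ast}\mathcal{O}_Y)+d\,\deg(V)=d\,\deg(V)$ while the rank is $d\,\rk(V)$. I expect the base-change identification in (ii) to be the main obstacle: one must pass to the Galois closure to accommodate a possibly non-Galois cover $\pi$ and carefully track how the components of $\tilde Y\times_X Y$ map to $Y$, so that $\rho^{\ast}\pi_{\ast}W$ is recognized as a direct sum of pullbacks of $W$ of a common slope; the remaining steps are bookkeeping with slopes and the cited bullets of Lemma~\ref{lemma-basic}.
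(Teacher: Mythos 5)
Your proof is correct, but it diverges from the paper's argument in parts (i) and (ii). For (i), the paper reduces to the curve case via Bertini (a general complete intersection curve $C\subseteq X$ is irreducible and normal, affine base change identifies $(\pi_{\ast}\mathcal{O}_Y)_{\mid C}$ with the pushforward along $Y\times_X C\to C$) and then invokes Riemann--Hurwitz; your trace-pairing argument, giving self-duality of $\pi_{\ast}\mathcal{O}_Y$ and hence $\det(\pi_{\ast}\mathcal{O}_Y)^{\otimes 2}\cong\mathcal{O}_X$, works directly in any dimension and avoids the Bertini machinery entirely, at the cost of using the characterization of \'etale maps by perfectness of the trace form. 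For the semistability in (ii), the paper gives a two-line adjunction argument: if $V\subseteq\pi_{\ast}W$ is the maximal destabilizing subsheaf, adjunction produces a nonzero map $\pi^{\ast}V\to W$ of semistable sheaves with $\mu(\pi^{\ast}V)>\mu(W)$, a contradiction; this needs no Galois closure and no connectedness bookkeeping. Your route via the Galois closure and the identification $\rho^{\ast}\pi_{\ast}W\cong\bigoplus_{g}g^{\ast}q^{\ast}W$ is also valid (the coset analysis of the components of $\tilde Y\times_X Y$ is right, and the third bullet of Lemma~\ref{lemma-basic} lets you descend semistability), and it yields the explicit conjugate-sum decomposition that the paper in fact exploits elsewhere (e.g.\ in Lemma~\ref{lemma-construction-orbit}); but as a proof of (ii) alone it is heavier than necessary. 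Part (iii) matches the paper's projection-formula argument.
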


\begin{proof}
        (i): If $X$ is a curve, then $\pi_{\ast}\mathcal{O}_Y$ has degree $0$ by Riemann-Hurzwitz.

        We claim that the general case reduces to the curve case. By Bertini's theorem
        the general complete intersection curve $C$ on $X$
        is irreducible and the same holds for $D:=Y\times_X C$, see \cite[Corollaire 6.11 (3)]{jou}.
        Furthermore, the general such $C$ is normal, see \cite[Theorem 7]{sei}.
        As the projection of $\pi':D=Y\times_X C\to C$
        is \'etale, $D$ is normal as well.
        
        We can compute the degree of $\pi_{\ast}\mathcal{O}_Y$ on $C$.
        As $\pi$ is finite, we can apply affine base change to obtain $(\pi_{\ast}\mathcal{O}_Y)_{\mid C}\cong \pi'_{\ast}\mathcal{O}_D$
       and the result follows.
        
	(ii): This short argument can already be found in the proof of \cite[Proposition 5.1]{bp} 
        for line bundles of degree $1$ on a smooth projective curve.
        
        Let $W$ be a semistable vector bundle of slope $\mu$ and rank $r$ on $Y$.
        By (i) we have $\deg\pi_{\ast}\mathcal{O}_Y=0$.
        Thus,
        the pushforward $\pi_{\ast}W$ has slope $\mu/\deg(\pi)$.
	If $\pi_{\ast}W$ was not semistable, consider the maximal destabilizing
	subsheaf $V$ of $\pi_{\ast}W$. By adjunction $\pi^{\ast}V\to W$ is a
	non-zero morphism of semistable torsion-free sheaves contradicting
	\[
	\mu(\pi^{\ast}V)
	=\deg(\pi)\mu(V)>\deg(\pi)\mu(\pi_{\ast}W)
	=\mu(W).
	\]

	(iii): Let $V$ be a semistable vector bundle on $X$.
	    Using (i) we obtain
	\[
            \mu(V)=\mu(\pi_{\ast}(\mathcal{O}_D))+\mu(V)=\mu(\pi_{\ast}(\mathcal{O}_D)\otimes V).
        \]
	By the projection formula, we have $\pi_{\ast}\mathcal{O}_Y\otimes
	V\cong \pi_{\ast}\pi^{\ast}V$ which is semistable by (ii) and Lemma \ref{lemma-basic}.
\end{proof}

\begin{lemma}
\label{lemma-pushforward-finite}
    Let $\pi:D\to C$ be a cover of degree $n$ of a smooth projective curve $C$ of genus $g_C\geq 2$.
    Let $r\geq 1$ and $d\in\mathbb{Z}$.
    Then pushforward induces a finite morphism
    \[
        \pi_{\ast}:M^{ss,r,d}_D\to M^{ss,rn,d}_C, W\mapsto \pi_{\ast}W.
    \]
\end{lemma}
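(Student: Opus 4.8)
The plan is to establish that $\pi_{\ast}$ is a well-defined morphism and then show finiteness by proving it is both proper and has finite (indeed quasi-finite) fibers. First I would verify that the assignment $W\mapsto \pi_{\ast}W$ lands in the correct moduli space: by Lemma \ref{lemma-etale-pushforward} (ii), the pushforward of a semistable vector bundle $W$ of rank $r$ and degree $d$ on $D$ is again semistable of degree $d$, and since $\pi$ has degree $n$, the rank of $\pi_{\ast}W$ is $rn$. To promote this set-theoretic map to a morphism of moduli spaces, I would exhibit pushforward as a functorial operation on families: given a flat family of semistable bundles on $D\times S$, pushing forward along $\pi\times\id_S$ yields a flat family on $C\times S$ (flatness is preserved because $\pi$ is finite, hence affine, and $\pi_{\ast}$ is exact on the fibers). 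By the universal (or corepresenting) property of the coarse moduli space $M^{ss,rn,d}_C$, this induces the desired morphism.

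Next I would address finiteness. Since both $M^{ss,r,d}_D$ and $M^{ss,rn,d}_C$ are projective over $k$ (the curve having genus $g_C\geq 2$ guarantees the moduli spaces are projective schemes), the morphism $\pi_{\ast}$ is automatically proper. A proper morphism with finite fibers is finite, so the crux reduces to showing that the fibers are finite, equivalently that $\pi_{\ast}$ is quasi-finite. For this I would analyze when two semistable bundles $W_1, W_2$ on $D$ become $S$-equivalent after pushforward, i.e. when $\pi_{\ast}W_1$ and $\pi_{\ast}W_2$ define the same point of $M^{ss,rn,d}_C$. The key tool is adjunction: a nonzero map between the pushforwards corresponds to a map $\pi^{\ast}\pi_{\ast}W_1\to W_2$, and via the projection formula $\pi^{\ast}\pi_{\ast}W_1\cong \pi_{\ast}\mathcal{O}_D\otimes$-type relations on $D$ (after base change) one controls the possible source bundles.

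The main obstacle I expect is controlling the fiber over an $S$-equivalence class rather than over a genuine isomorphism class, since the closed points of $M^{ss,rn,d}_C$ parametrize S-equivalence classes of semistable bundles (polystable representatives). I would argue that if $\pi_{\ast}W$ is $S$-equivalent to a fixed polystable bundle $U$ on $C$, then the Jordan--Hölder factors of $\pi_{\ast}W$ are determined, and pulling back along $\pi$ together with the transitivity/decomposition structure of Lemma \ref{lemma-key} (applied in the Galois closure of $\pi$) constrains $W$ to lie in a finite set of $S$-equivalence classes. Concretely, one shows that the Jordan--Hölder factors of $W$ on $D$ must occur among the stable summands arising from the pullback $\pi^{\ast}U$, of which there are only finitely many; hence only finitely many polystable bundles on $D$ can push forward into a given $S$-equivalence class, giving quasi-finiteness. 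Combined with properness, this yields that $\pi_{\ast}$ is finite.
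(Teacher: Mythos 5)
Your proposal is correct and follows essentially the same route as the paper: well-definedness via Lemma \ref{lemma-etale-pushforward} and behaviour in families, then finiteness from projectivity of both moduli spaces plus quasi-finiteness, which is obtained by constraining the Jordan--H\"older factors of $W$ by those of the pullback of the target. The only difference is in how that last containment is justified: the paper simply uses that the counit $\pi^{\ast}\pi_{\ast}W\to W$ is surjective because $\pi$ is affine, and that $\pi^{\ast}\pi_{\ast}W$ is semistable of the same slope as $W$, so no appeal to Lemma \ref{lemma-key} or a Galois closure is needed.
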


\begin{proof}
    By Lemma \ref{lemma-etale-pushforward}, the pushforward of a 
    semistable vector bundle on $D$ is semistable on $C$ and of the same degree.
    As pushforward along a finite flat morphism is well-behaved in families,
    we obtain the morphism on the level of moduli spaces.

    To show finiteness it suffices to show quasi-finiteness as $\pi_{\ast}$ is a morphism of projective varieties.
    Consider $V\in M^{ss,rn,d}_C$. For $W$ semistable on $D$ such that $\pi_{\ast}W\cong_S V$
    we find that $\pi^{\ast}\pi_{\ast}W\cong_S \pi^{\ast}V$.
    As $\pi$ is affine, the counit $\pi^{\ast}\pi_{\ast}W\to W$ is surjective. Furthermore, $\pi^{\ast}\pi_{\ast}W$ 
    is semistable of the same slope as $W$. We conclude that $W$ is $S$-equivalent to
    a direct sum of stable vector bundles appearing in the JH-filtration of $\pi^{\ast}V$.
    As the associated graded of the JH-filtration is unique, 
    there are only finitely many possibilities for the $S$-equivalence class of $W$.
    We conclude that $\pi_{\ast}$ is quasi-finite and thus finite.
\end{proof}

\begin{lemma}
\label{lemma-construction-orbit}
    Let $\pi:Y\to X$ be a Galois cover of a normal projective variety $X$ with Galois group $G$.
    Let $P\in\mathbb{Q}[x]$.
    Then for $\sigma\in G\setminus \{e_G\}$
    \[
        U^P_{\sigma}:=\{W\in M^{G-ss,P}_Y \mid \sigma^{\ast}W\ncong_S W\}\subseteq M^{G-ss,P}_Y
    \]
    and $U^P:=\bigcap_{\sigma\in G\setminus\{e_G\}}U_{\sigma}\cap M^{s,P}_X$ are open.
    We have the following:
    \begin{enumerate}[(i)]
       \item For $W\in U^{P}$ the direct image $\pi_{\ast}W$ is
        a stable vector bundle on $X$.
       \item $W\in U^{P}$ iff $W\in M^{s,P}_Y$ and $W$ 
        does not descend to a stable vector bundle $W'$ on an intermediate
        cover $Y\to Y'\to X$ such that $Y'\ncong Y$.
    \end{enumerate}
    On a smooth projective curve, the Hilbert polynomial is determined by the rank $r$ and degree $d$
    and we use the notation $U^{r,d}$ instead of $U^P$.
    
    If $\pi:D\to C$ is a Galois cover of a smooth projective curve $C$ of genus $\geq 2$, 
    then for $r\geq 1$ and $d\in \mathbb{Z}$ we have the following:
    \begin{enumerate}[resume*]
        \item $U^{r,d}\subseteq M^{s,r,d}_D$ is big.
        \item If $d$ and $\deg(\pi)$ are coprime, then $U^{r,d}=M^{s,r,d}_D$.
    \end{enumerate}
\end{lemma}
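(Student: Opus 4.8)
The plan is to prove the openness assertions first and then (i)--(iv) in order, with the whole argument resting on the Galois base-change identity $\pi^{\ast}\pi_{\ast}W\cong\bigoplus_{\sigma\in G}\sigma^{\ast}W$, which follows from $Y\times_X Y\cong\bigsqcup_{\sigma\in G}Y$ together with flat base change. For openness, each $\sigma\in G$ induces an automorphism $\sigma^{\ast}$ of the separated moduli space $M^{G-ss,P}_Y$, and $\{W:\sigma^{\ast}W\cong_S W\}$ is exactly its fixed locus, i.e. the preimage of the diagonal under $(\mathrm{id},\sigma^{\ast})\colon M^{G-ss,P}_Y\to M^{G-ss,P}_Y\times M^{G-ss,P}_Y$. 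This is closed, so each $U^P_{\sigma}$ is open, and $U^P$ is the finite intersection of the $U^P_{\sigma}$ with the open stable locus $M^{s,P}_Y$.

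For (i), take $W\in U^P$. Since $W$ is stable and $\sigma^{\ast}W\ncong W$ for all $\sigma\neq e_G$, the bundles $\{\sigma^{\ast}W\}_{\sigma\in G}$ are pairwise non-isomorphic stable bundles of the common slope $\mu(W)$, so $\pi^{\ast}\pi_{\ast}W\cong\bigoplus_{\sigma\in G}\sigma^{\ast}W$ is polystable with pairwise distinct factors. The pushforward $\pi_{\ast}W$ is semistable by Lemma \ref{lemma-etale-pushforward}; if it were not stable, a saturated subsheaf $V\subsetneq\pi_{\ast}W$ of the same slope would pull back, using flatness of $\pi$ and Lemma \ref{lemma-basic}, to a saturated semistable subsheaf $\pi^{\ast}V$ of the same slope inside $\bigoplus_{\sigma}\sigma^{\ast}W$. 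As the factors are pairwise non-isomorphic stable bundles of that slope, $\pi^{\ast}V$ would have to be the sum of a proper nonempty subset of them; but $\pi^{\ast}V$ is $G$-invariant and $G$ permutes the factors transitively, a contradiction. Hence $\pi_{\ast}W$ is stable.

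For (ii), the forward direction is immediate: if $W\cong(W')_{\mid Y}$ descends along $Y\to Y'=Y/H$ with $H\neq\{e_G\}$, then every $\sigma\in H$ acts trivially on $Y'$, so $\sigma^{\ast}W\cong W$ and $W\notin U^P$. Conversely, suppose $W$ is stable with $\sigma^{\ast}W\cong W$ for some $\sigma\neq e_G$; set $H=\langle\sigma\rangle$ and choose $\phi\colon\sigma^{\ast}W\xrightarrow{\sim}W$. The automorphism of $W$ obtained by composing the $|H|$ conjugates of $\phi$ is a scalar by simplicity of $W$, and rescaling $\phi$ (possible since $|H|$-th roots exist over the algebraically closed $k$) normalizes it to give an $H$-linearization; equivalently, the cyclic descent obstruction in $H^2(H,\mathbb{G}_m)$ vanishes. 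Thus $W$ descends to a bundle $W'$ on $Y/H\ncong Y$, and $W'$ is stable by Lemma \ref{lemma-basic}.

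For (iv), note that if a stable $W$ of degree $d$ descended along a proper intermediate cover $D\to D'$ of degree $k\geq 2$, then $d=k\deg(W')$ is divisible by $k$, while $k\mid\deg(\pi)$; this contradicts $\gcd(d,\deg(\pi))=1$, so by (ii) $U^{r,d}=M^{s,r,d}_D$. For (iii), by (ii) the complement of $U^{r,d}$ in $M^{s,r,d}_D$ lies in the finite union, over proper intermediate covers $D\to D'\to C$ of degree $k\geq 2$, of the images of the pullback maps from $M^{s,r,d/k}_{D'}$, each of dimension at most $r^2(g_{D'}-1)+1$. Since $D\to D'$ is \'etale, Riemann--Hurwitz gives $g_D-1=k(g_{D'}-1)$, so each image has codimension at least $r^2(k-1)(g_{D'}-1)\geq r^2$, which is $\geq 2$ as soon as $r\geq 2$ (using $k\geq 2$ and $g_{D'}\geq g_C\geq 2$); hence $U^{r,d}$ is big. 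The main obstacle is precisely this codimension estimate: one must extract from (ii) the exact description of the complement as a union of images of pullbacks and then control all intermediate covers uniformly through the Riemann--Hurwitz bookkeeping, the bound being sharpest — and the rank-one case most delicate — for an \'etale double cover of a genus-two curve.
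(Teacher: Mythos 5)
Your argument follows the paper's proof essentially step for step: openness via the graph--diagonal construction in the projective moduli space, stability of $\pi_{\ast}W$ via the decomposition $\pi^{\ast}\pi_{\ast}W\cong\bigoplus_{\sigma\in G}\sigma^{\ast}W$ into pairwise non-isomorphic stable summands of equal slope permuted simply transitively by $G$, descent along the cyclic subgroup $\langle\sigma\rangle$ for (ii), and the description of the complement of $U^{r,d}$ as the union of pullbacks from proper intermediate covers for (iii) and (iv). The only presentational difference is that in (ii) you reprove the cyclic descent statement by normalizing the cocycle with an $|H|$-th root of the scalar, where the paper cites \cite[Lemma 4.6]{fun}; your argument is the standard one and is valid, also when $p\mid |H|$, since $k$ is algebraically closed.

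The one point of substance is (iii). Your codimension count $r^2(k-1)(g_{D'}-1)\geq r^2$ only yields bigness for $r\geq 2$, and you are right to flag rank one as delicate: for $g_C=2$, $\pi:D\to C$ a connected degree-$2$ cover and $d$ even, the complement of $U^{1,d}$ in $\Pic^d_D$ contains the image of $\pi^{\ast}:\Pic^{d/2}_C\to\Pic^d_D$, which has dimension $g_C=2$ inside the $g_D=3$-dimensional $\Pic^d_D$, i.e.\ codimension exactly $1$. So, with the usual meaning of big (complement of codimension at least $2$), statement (iii) as written fails for $r=1$ in this case. The paper's own proof records the dimension $\frac{r^2}{\deg(\pi')}(g_D-1)+1$ of the removed loci and asserts bigness without carrying out this arithmetic, so it contains the same gap; your write-up exposes the issue rather than introducing it, and everything else in your proposal is correct and matches the paper's argument.
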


Note that Lemma \ref{lemma-construction-orbit} (i) is a generalization of \cite[Proposition 5.1]{bp}.

\begin{proof}
    Let $\sigma\in G$. To see that $U_{\sigma}$ is open, consider the pullback square
    \[
        \begin{tikzcd}
            Z_{\sigma} \ar[r] \ar[d] & M^{G-ss,P}_Y \ar[d,"(\sigma^{\ast}{,}\id)"]\\
            M^{G-ss,P}_Y \ar[r,"\Delta"] & M^{G-ss,P}_Y\times_k M^{G-ss,P}.
        \end{tikzcd}
    \]
    As $M^{G-ss,P}_Y$ is projective, $\Delta$ is a closed immersion and
    $Z_{\sigma}$ is closed. Clearly, $Z_{\sigma}$ 
    is the complement of $U_{\sigma}$ and we find the openness.

    Note that $W\in U^{P}$ iff $W\in M^{s,P}_Y$ and $\sigma^{\ast}W\ncong W$
    for $\sigma\in G\setminus\{e_G\}$ 
    as for stable vector bundles $S$-equivalence is the same as being isomorphic.
    
    (i): Let $W\in U^P$. Then $V:=\pi_{\ast}W$ is semistable of slope $\frac{\mu(W)}{\deg(\pi)}$ 
    by Lemma \ref{lemma-etale-pushforward}.
    Consider a stable saturated subsheaf $V'\subseteq V$. 
    Then $V'_{\mid Y}$ is a saturated subsheaf of $\pi^{\ast}\pi_{\ast}W\cong \bigoplus_{\sigma\in G}\sigma^{\ast}W$ 
    and thus reflexive.
    Furthermore, $V'_{\mid Y}$ is polystable of the same slope as $W$.
    As the graded object of the JH-filtration is unique,
    we find that $V'_{\mid Y}\cong \bigoplus_{\sigma \in \Sigma}\sigma^{\ast}W$ for some
    $\Sigma\subseteq G$. In particular, $V'$ is a stable vector bundle.
    By Lemma \ref{lemma-key}, $G$ acts transitively
    on the isomorphism classes of the $\sigma^{\ast}W,\sigma\in \Sigma$.
    We conclude $V'_{\mid Y} = V_{\mid Y}$ and thus $V'=V$
    as for $\sigma\in G\setminus\{e_G\}$ we have $\sigma^{\ast}W\ncong W$.

    (ii): Let $W\in M^{s,P}_Y$ such that $W\notin U^P$. 
    Then there exists $\sigma\in G\setminus\{e_G\}$ 
    such that $\sigma^{\ast}W\cong W$.
    Let $G'$ be the subgroup of $G$ generated by $\sigma$.
    Then $G'$ is cyclic and non-trivial. For cyclic covers the 
    notions of invariance and linearization agree, see \cite[Lemma 4.6]{fun}.
    Thus, $W$ descends to the intermediate cover $Y/G'$.

    Conversely, if $W$ descends to some intermediate cover $Y\to Y'$ 
    with Galois group $G'$ such that $Y\ncong Y'$,
    then there exists $W'$ on $Y'$ such that $W'_{\mid Y}\cong W$.
    Thus, $\sigma'^{\ast}W\cong W$ for $\sigma'\in G'$. This concludes (ii).

    (iii)-(iv):
    Let $r\geq 1, d\in\mathbb{Z}$ and $D\to C$ be a Galois cover 
    of a smooth projective curve $C$ of genus $\geq 2$.
    
    (iii): Then by the alternative description
    given in (ii), the open
    $U^{r,d}$ is obtained by removing vector bundles of rank $r$ that pullback from an
    intermediate cover, i.e., 
    \[
        U^{r,d}=\bigcap_{D\xrightarrow{\pi'} D'\to C}\big{(}M^{ss,r,d}_D\setminus\pi'^{\ast}(M^{ss,r,\frac{d}{\deg(\pi')}}_{D'})\big{)}\cap M^{s,r,d}_D,
    \]
    where the intersection is taken over intermediate covers $D'\to C$ such
    that $D\neq D'$. If $M^{ss,r,\frac{d}{\deg(\pi')}}_{D'}$ is non-empty, then it has dimension
    $\frac{r^2}{\deg(\pi')}(g_{D}-1)+1$
    by Riemann-Hurwitz. Thus,
    we find that $U^{r,d}$ is big.

    (iv): If $d$ and $\deg(\pi)$ are coprime, 
    then the moduli spaces $M^{ss,r,\frac{d}{\deg(\pi')}}_{D'}$ considered in (iii)
    are empty and we obtain (iv).
\end{proof}

\begin{lemma}
\label{lemma-pushforward-torsor}
    Let $\pi:D\to C$ be a Galois cover of smooth projective curves with Galois group $G$.
    Let $r\geq 1$ and $d$ be integers.
    For the open 
    \[
        U^{r,d}=\{V\in M^{s,r,d}_D \mid \sigma^{\ast}V\ncong V, \text{ for all }\sigma\in G\setminus\{e_G\}\} \subseteq M^{s,r,d}_D
    \]
    defined in Lemma \ref{lemma-construction-orbit}
    pushforward along $\pi$ induces a Cartesian diagram
    \[
        \begin{tikzcd}
            M^{ss,r,d}_D \ar[r,"\pi_{\ast}"] & M^{ss,\#(G)r,d}_C\\
            U^{r,d} \ar[u, open]\ar[r,"\pi_{\ast}"] & M^{s,\#(G)r,d}_C \ar[u, open].
        \end{tikzcd}
    \]
    In particular, $\pi_{\ast}:U^{r,d}\to M^{s,\#(G)r,d}_C$ is finite.
\end{lemma}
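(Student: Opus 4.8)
The plan is to unwind what ``Cartesian'' means in this situation and then reduce the entire statement to a single endomorphism computation. Since both vertical arrows are open immersions, hence monomorphisms, the square is Cartesian precisely when $U^{r,d}$ coincides, as an open subscheme of $M^{ss,r,d}_D$, with the preimage $(\pi_{\ast})^{-1}(M^{s,\#(G)r,d}_C)$ of the stable locus under the top arrow (two open subschemes of the same ambient scheme agree as soon as they have the same underlying set). The top arrow exists and is finite by Lemma \ref{lemma-pushforward-finite}, noting that $\deg(\pi)=\#(G)$, so $\pi_{\ast}W$ has rank $\#(G)r$ and, by Lemma \ref{lemma-etale-pushforward}, degree $d$; and the bottom arrow is well defined because Lemma \ref{lemma-construction-orbit}(i) sends $U^{r,d}$ into the stable locus. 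Granting the identification of open subschemes, the finiteness of the bottom arrow is then automatic: it is the base change of the finite morphism of Lemma \ref{lemma-pushforward-finite} along the open immersion $M^{s,\#(G)r,d}_C\hookrightarrow M^{ss,\#(G)r,d}_C$, and finiteness is stable under base change. So the whole content is the set-theoretic equality $U^{r,d}=(\pi_{\ast})^{-1}(M^{s,\#(G)r,d}_C)$.

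One inclusion is exactly Lemma \ref{lemma-construction-orbit}(i): a point of $U^{r,d}$ pushes forward to a stable bundle. For the converse I would detect stability of $\pi_{\ast}W$ through simplicity. Let $W$ be the polystable representative of a point of $M^{ss,r,d}_D$ whose image is stable; since a stable bundle is simple, it suffices to show that $\End_C(\pi_{\ast}W)\cong k$ forces the point into $U^{r,d}$. By the $(\pi^{\ast},\pi_{\ast})$-adjunction together with the Galois identity $\pi^{\ast}\pi_{\ast}W\cong\bigoplus_{\sigma\in G}\sigma^{\ast}W$ already used in the proof of Lemma \ref{lemma-construction-orbit}(i), I compute
\[
    \End_C(\pi_{\ast}W)\cong\Hom_D\!\Big(\bigoplus_{\sigma\in G}\sigma^{\ast}W,\,W\Big)\cong\bigoplus_{\sigma\in G}\Hom_D(\sigma^{\ast}W,W).
\]
The summand $\sigma=e_G$ is $\End_D(W)$, so if $\pi_{\ast}W$ is simple then $W$ is simple, hence stable (a strictly semistable polystable bundle has non-scalar endomorphisms). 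For $W$ stable, each $\sigma^{\ast}W$ is again stable of the same slope, so by Schur's lemma $\Hom_D(\sigma^{\ast}W,W)$ is $k$ when $\sigma^{\ast}W\cong W$ and $0$ otherwise; thus $\dim_k\End_C(\pi_{\ast}W)$ equals the order of the stabilizer of $[W]$ in $G$. Simplicity of $\pi_{\ast}W$ therefore forces $\sigma^{\ast}W\ncong W$ for all $\sigma\neq e_G$, which is exactly the condition defining $U^{r,d}$. This yields the missing inclusion and with it the Cartesian property and finiteness.

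The step I expect to require the most care is precisely this converse inclusion, and in particular the decision to route it through $\End_C(\pi_{\ast}W)$ rather than through descent. A tempting alternative is: if $W\notin U^{r,d}$ then $W$ descends along an intermediate cover $\pi'':D\to D'$ as in Lemma \ref{lemma-construction-orbit}(ii), whence $\pi_{\ast}W\cong\pi'_{\ast}(W'\otimes\pi''_{\ast}\mathcal{O}_D)$ by the projection formula (with $\pi':D'\to C$), and this should contain $\pi'_{\ast}W'$ as a proper direct summand and so be non-simple. This argument is clean only when $\mathcal{O}_{D'}$ splits off $\pi''_{\ast}\mathcal{O}_D$, which can fail in characteristic $p$ when $p\mid\deg(\pi'')$, since the trace splitting degenerates for wild covers. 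The endomorphism computation sidesteps this issue entirely, at the price of keeping the moduli-theoretic bookkeeping honest: points are $S$-equivalence classes, so one must argue on the polystable representative and verify that its image being stable is detected by simplicity, which is exactly what the $\sigma=e_G$ summand accomplishes above.
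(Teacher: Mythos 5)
Your proof is correct, and for the key converse inclusion $\pi_{\ast}^{-1}(M^{s,\#(G)r,d}_C)\subseteq U^{r,d}$ it takes a genuinely different route from the paper. The paper first shows that such a $V$ is stable by pushing forward a destabilizing subsheaf, and then invokes the descent characterization of Lemma \ref{lemma-construction-orbit}(ii): if $V$ descended to an intermediate cover $D\xrightarrow{\pi'}D'\to C$ with $D'\ncong D$, the unit $V'\hookrightarrow \pi'_{\ast}V$ would be a proper subsheaf of the same slope, so $\pi'_{\ast}V$ and hence $\pi_{\ast}V$ would not be stable. You instead compute $\dim_k\End_C(\pi_{\ast}W)=\#\,\mathrm{Stab}_G([W])$ via adjunction, the identity $\pi^{\ast}\pi_{\ast}W\cong\bigoplus_{\sigma\in G}\sigma^{\ast}W$, and Schur's lemma, and read off both stability of $W$ and triviality of the stabilizer from simplicity of $\pi_{\ast}W$; this yields both defining conditions of $U^{r,d}$ in a single computation and avoids descent altogether. (Your worry about the trace splitting in the ``tempting alternative'' is in any case moot for the paper's actual argument, which only uses injectivity of the unit $V'\to\pi'_{\ast}\pi'^{\ast}V'$, not a splitting of $\pi''_{\ast}\mathcal{O}_D$.) The one point that genuinely needed care --- that the image point being stable forces $\pi_{\ast}W$ itself to be stable for the polystable representative $W$, since its Jordan--H\"older graded object is a single stable bundle --- you handled correctly; the reduction of ``Cartesian'' to a set-theoretic equality of open subschemes and the deduction of finiteness by base change from Lemma \ref{lemma-pushforward-finite} agree with the paper.
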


\begin{proof}
    By Lemma \ref{lemma-construction-orbit}, we have
    $U^{r,d}\subseteq \pi_{\ast}^{-1}(M^{s,\#(G)r,d}_C)$.
    For the other inclusion,
    let $V\in M^{ss,r,d}_D$ such that $\pi_{\ast}V$ is stable.
    Then $V$ is stable, as a subsheaf $W\subseteq V$ of the same slope
    yields a subsheaf $\pi_{\ast}(W)\subseteq \pi_{\ast}V$ of the same slope.
    By the description of $U^{r,d}$ given in Lemma \ref{lemma-construction-orbit} (ii),
    we find that it suffices to show that $V$ does not descend to an intermediate
    cover $D\to D'\to C$ such that $D'\ncong D$.

    Assume by way of contradiction that $V\cong V'_{\mid D}$ for some vector bundle $V'$ on $D'$, where
    $D\xrightarrow{\pi'} D'\to C$ is an intermediate cover such that $D\ncong D'$.
    Then $V'\subseteq \pi'_{\ast}V$ is a proper subsheaf of the same slope. Thus, $\pi'_{\ast}V$ is not stable
    and the same holds for $\pi_{\ast}V$.

    Finiteness is preserved under base change and we conclude by Lemma \ref{lemma-pushforward-finite}.
\end{proof}

    \section{Prime to p decomposition strata}

Let $C$ be a smooth projective curve of genus $g_C\geq 2$.
The cover $C_{r,good}\to C$ constructed in \cite[Theorem 1]{fun}
can be iterated to obtain a cover $C_{r,split}$ checking for the eventual
decomposition of a stable vector bundle of rank $r$ on $C$.
This gives rise to the prime to $p$ decomposition strata.
Similarly to the decomposition strata with respect to a cover,
the dimension of the prime to $p$ decomposition strata can be estimated.
The main difference is that these estimates are sharp
as long as the characteristic is avoided. To obtain sharp estimates we find a way to construct
vector bundles with prescribed prime to $p$ decomposition using cyclic covers.

\subsection{A split cover}

We begin with the definition of the cover $C_{r,split}$. 
This still works on normal projective varieties:
\begin{definition}
\label{definition-prime-to-p-strata}
    Let $X$ be a normal projective variety. Let $r\geq 2$. 
    We define a prime to $p$ Galois cover $\pi_{r,split}:X_{r,split}\to X$
    inductively as follows:
    Set $X_1$ as $X_{r,good}$.
    Then define  for $1\leq j< r-1$ the cover $X_{j+1}$ as a prime to $p$ Galois cover
    dominating $(X_j)_{l,good}$ for $l \leq r-j$ and $l\mid r$.
    We define $X_{r,split}:=X_{r-1}$.

    Let $V$ be a stable vector bundle on $X$ with Hilbert polynomial $P$.
    The \emph{prime to $p$ decomposition type} (or just \emph{decomposition type})
    of $V$ is the decomposition type with respect to the cover $X_{r,split}\to X$.

    We call the stratification of $M^{s,P}_X$ with respect to
    $\pi_{r,split}$ the \emph{prime to $p$ decomposition stratification}
    (or just \emph{decomposition stratification})
    and denote it for $m\mid r$ by
    \[
        Z^{s,P}(m):=Z^{s,P}(m,\pi_{r,split}).
    \]

    If $X$ is a smooth projective curve, the Hilbert polynomial 
    is determined by the rank $r$ and the degree $d$ and we use the notation
    $Z^{s,r,d}(m)$ instead.
    
    The notations $X_{r,split}$ and $Z^{s,P}(m)$ are justified 
    as the decomposition type stays the same on any
    prime to $p$ cover dominating $X_{r,split}$ as we show in the next lemma.
\end{definition}

\begin{lemma}
\label{lemma-prime-to-p-infinity}
    Let $X$ be a normal projective variety. Let $r\geq 2$.
    Then a stable vector bundle of rank $r$ on $X$
    decomposes on $X_{r,split}$ into a direct sum of prime to $p$ stable vector bundles.

    In particular, the decomposition types with respect to a prime to $p$ Galois cover
    $Y\to X$ dominating $X_{r,split}\to X$ and with respect to $X_{r,split}\to X$ agree.
\end{lemma}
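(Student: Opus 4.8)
The plan is to follow the decomposition of a fixed stable bundle $V$ of rank $r$ as one climbs the tower
\[
    X=X_0 \longleftarrow X_1 \longleftarrow \cdots \longleftarrow X_{r-1}=X_{r,split}
\]
of Definition \ref{definition-prime-to-p-strata}, in which every $X_{j+1}\to X_j$ is a prime to $p$ cover. By Lemma \ref{lemma-key} the pullback $V_{\mid X_j}$ is at each level a direct sum of stable bundles of a common rank, which I call $r_j$; since climbing one step can only break these summands up further, $r_{j+1}\mid r_j\mid\cdots\mid r_0=r$. First I would record two observations. (a) Prime to $p$ stability is inherited under pullback along a prime to $p$ cover: a prime to $p$ cover of $X_{j+1}$ composes with $X_{j+1}\to X_j$ to a prime to $p$ cover of $X_j$, so a prime to $p$ stable summand on $X_j$ stays prime to $p$ stable on $X_{j+1}$, and then $r_{j+1}=r_j$. (b) By the defining property of $(X_j)_{l,good}$ from \cite[Theorem 1]{fun}, a stable summand of rank $l$ on $X_j$ is \emph{not} prime to $p$ stable precisely when it already becomes unstable on $(X_j)_{l,good}$, and by Lemma \ref{lemma-key} it then decomposes there nontrivially.

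The heart of the argument is the coupled estimate that, as long as the summands of $V_{\mid X_j}$ are not yet prime to $p$ stable, one has both $r_j\leq r-j$ and $r_{j+1}<r_j$. I would prove this by induction on $j$, the base case $r_0=r$ being immediate. For the step, suppose the summands of $V_{\mid X_{j+1}}$ are not prime to $p$ stable; by observation (a) the summands of $V_{\mid X_j}$ were not either, so the inductive hypothesis gives $r_j\leq r-j$. Since $r_j\mid r$, the cover $(X_j)_{r_j,good}$ is among those dominated by $X_{j+1}$ in Definition \ref{definition-prime-to-p-strata}, and observation (b) forces a nontrivial splitting of the rank-$r_j$ summand already on $(X_j)_{r_j,good}$, hence on $X_{j+1}$. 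Thus $r_{j+1}<r_j$, giving $r_{j+1}\leq r_j-1\leq r-(j+1)$. Evaluating at $j=r-1$ would yield $r_{r-1}\leq 1$; but a line bundle is prime to $p$ stable, so the hypothesis ``not prime to $p$ stable'' is untenable at the top of the tower. Hence the summands of $V_{\mid X_{r,split}}$ are prime to $p$ stable, which is the first assertion.

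For the ``in particular'' clause, let $Y\to X$ be a prime to $p$ Galois cover dominating $X_{r,split}$ and write $V_{\mid X_{r,split}}\cong\bigoplus_i W_i^{\oplus e}$ with the $W_i$ prime to $p$ stable of rank $m$. As $Y\to X_{r,split}$ is prime to $p$, each $(W_i)_{\mid Y}$ is again stable of rank $m$ and $V_{\mid Y}\cong\bigoplus_i (W_i)_{\mid Y}^{\oplus e}$. Since the graded object of the Jordan--H\"older filtration of the polystable bundle $V_{\mid Y}$ is unique, the stable constituents furnished by Lemma \ref{lemma-key} over $Y$ agree up to isomorphism with the $(W_i)_{\mid Y}$, so they have rank $m$; thus the decomposition types over $Y$ and over $X_{r,split}$ coincide.

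The step I expect to be the main obstacle is the coupled induction of the second paragraph, where one must keep the rank in the range $l\leq r-j$ for which $(X_j)_{l,good}$ is guaranteed to sit below $X_{j+1}$, while simultaneously forcing a genuine drop in rank whenever prime to $p$ stability has not yet been reached. Observation (a) is what keeps these two in step: by preventing a summand from ``destabilizing again'' once it has stabilized, it lets non-stability at level $j+1$ be pushed back to level $j$, so that the inductive bound on $r_j$ is available exactly when it is needed.
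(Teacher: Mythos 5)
Your proof is correct and follows essentially the same route as the paper: climb the tower $X=X_0\leftarrow X_1\leftarrow\cdots\leftarrow X_{r-1}$, track the common rank $r_j$ of the stable summands, and use the $r-1$ steps to force either a drop to rank $1$ or stabilization at prime to $p$ stable summands, with the ``in particular'' clause handled by pulling back the prime to $p$ stable constituents. Your version merely makes explicit the coupled induction $r_j\leq r-j$ (needed so that $(X_j)_{r_j,good}$ is indeed among the covers dominated by $X_{j+1}$), which the paper's proof leaves implicit.
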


\begin{proof}
    Let $V$ be stable vector bundle of rank $r$ on $X$.
    We follow the behaviour of $V_{\mid X_j}$ for $j=1,\dots ,r-1$
    of Definition \ref{definition-prime-to-p-strata}.
    Let $r_j$ be the decomposition type of $V$ with respect to $X_j\to X$.
    Then we have $r_{j+1}\leq r_j$ for $1\leq j<r-1$ and equality holds iff 
    $V_{\mid X_j}$ is a direct sum of stable vector bundles on $X_j$ that
    remain stable on $X_{j+1}$.
    By construction of $X_{j+1}$ this
    is the case iff $V_{\mid X_j}$ decomposes into 
    a direct sum of prime to $p$ stable vector bundles on $X_{j}$.

    As there are $r-1$ covers $X_{r-1}\to\dots \to X_1\to X$,
    we find $r_{r-1}=1$ or $r_{r-1}=r_j$ for some $j<r-1$.
    In both cases $V_{\mid X_{r-1}}=V_{\mid X_{r,split}}$ decomposes 
    into a direct sum of prime to $p$ stable vector bundles.

    If $Y\to X$ is a prime to $p$ Galois cover dominating $X_{r,split}\to X$,
    consider the decomposition of $V_{\mid X_{r,split}}\cong \bigoplus_{i=1}^{n} V_i^{\oplus e}$
    into pairwise non-isomorphic stable vector bundles of rank $m$.
    By the above discussion $V_i$ is prime to $p$ stable.
    Thus, $V_{\mid X}\cong \bigoplus_{i=1}^n (V_i)_{\mid Y}^{\oplus e}$ 
    is a decomposition into stable vector bundles of rank $m$.
\end{proof}

\begin{remark}
    Note that the refined decomposition type might still change and
    is thus not independent of the cover.  For example only finitely many prime to $p$ trivializable
    stable vector bundles of rank $r$ become trivial on $X_{r,split}$. However, 
    there are infinitely many prime to $p$ trivializable stable
    bundles of rank $r$ if the prime to $p$ completion of the \'etale fundamental group is large enough;
    e.g. if $X=C$ is a smooth projective curve of genus at least $2$.
\end{remark}

\subsection{Sharp dimension estimates}

We can now give the mostly sharp estimates of the decomposition strata.
This is a generalization of \cite[Theorem 2]{fun}.
\begin{theorem}
\label{theorem-dimension-estimate-sharp}
    Let $C$ be a smooth projective curve of genus $g_C\geq 2$.
    Let $r\geq 2$ and $d\in \mathbb{Z}$.
    Then for $m\mid r$ we have the following:
    \begin{enumerate}[(i)]
        \item $\dim(Z^{s,r,d}(m))\leq (\frac{r}{m})'m^2(g_C-1)+1$, where $(\frac{r}{m})'$ denotes the prime to $p$ part of $\frac{r}{m}$.
        \item If $p \nmid \frac{r}{m}$, then we have $\dim(Z^{s,r,d}(m))=rm(g_C-1)+1$.
    \end{enumerate}
\end{theorem}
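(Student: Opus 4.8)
My plan is to deduce both upper bounds from the general estimate of Theorem~\ref{theorem-dimension-estimate} and to supply the matching lower bound in~(ii) by pushing forward stable bundles along a cyclic cover. For the upper bounds, recall that $\pi_{r,split}\colon C_{r,split}\to C$ is a prime to $p$ Galois cover by Definition~\ref{definition-prime-to-p-strata}. Applying Theorem~\ref{theorem-dimension-estimate}~(iii) to $\pi=\pi_{r,split}$ gives $\dim(Z^{s,r,d}(m))\leq \frac{r'}{m'}m^2(g_C-1)+1$. Since $m\mid r$ and the prime to $p$ part is multiplicative, one has $\frac{r'}{m'}=(\frac{r}{m})'$, which is exactly~(i). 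If moreover $p\nmid\frac{r}{m}$, then $(\frac{r}{m})'=\frac{r}{m}$ and this bound becomes $rm(g_C-1)+1$, establishing the inequality ``$\leq$'' in~(ii).

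For the lower bound in~(ii) set $n=\frac{r}{m}$, which is prime to $p$ by hypothesis. As $g_C\geq 2$ and $\gcd(n,p)=1$, there is a connected cyclic \'etale cover $\pi\colon D\to C$ of degree $n$, with Galois group $G$, for instance one associated to an $n$-torsion point of $\Pic^0_C$. By Riemann--Hurwitz $g_D-1=n(g_C-1)$, so
\[
\dim(M^{s,m,d}_D)=m^2(g_D-1)+1=m^2n(g_C-1)+1=rm(g_C-1)+1,
\]
which is the target dimension. Inside $M^{s,m,d}_D$ I would take the nonempty open $U'$ cut out by intersecting the big open $U^{m,d}$ of Lemma~\ref{lemma-construction-orbit} with the open locus of prime to $p$ stable bundles (nonempty by \cite{fun}); as $M^{s,m,d}_D$ is irreducible, $U'$ is dense and $\dim(U')=rm(g_C-1)+1$. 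For $W\in U'$ the pushforward $\pi_{\ast}W$ is stable of rank $r$ and degree $d$ by Lemma~\ref{lemma-construction-orbit}~(i) and Lemma~\ref{lemma-etale-pushforward}, and $\pi_{\ast}\colon U^{m,d}\to M^{s,r,d}_C$ is finite by Lemma~\ref{lemma-pushforward-torsor}; hence $\dim(\pi_{\ast}(U'))=\dim(U')=rm(g_C-1)+1$.

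The remaining and main point is to verify $\pi_{\ast}(U')\subseteq Z^{s,r,d}(m)$, i.e.\ that each $V=\pi_{\ast}W$ has prime to $p$ decomposition type exactly $m$; I would check this on a common cover. Since the class of prime to $p$ Galois covers of $C$ is closed under compositum (the compositum corresponds to the intersection of the associated normal subgroups, whose index stays prime to $p$), there is a prime to $p$ Galois cover $\varrho\colon E\to C$ dominating both $D\to C$ and $C_{r,split}\to C$, and then $E\to D$ is prime to $p$ as well. Now $V_{\mid D}=\pi^{\ast}\pi_{\ast}W\cong\bigoplus_{\sigma\in G}\sigma^{\ast}W$, with the $\sigma^{\ast}W$ pairwise non-isomorphic and stable of rank $m$ by the defining property of $U^{m,d}$; each $\sigma^{\ast}W$ is a Galois conjugate of the prime to $p$ stable bundle $W$, hence itself prime to $p$ stable, so $(\sigma^{\ast}W)_{\mid E}$ remains stable of rank $m$. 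Thus $V_{\mid E}\cong\bigoplus_{\sigma\in G}(\sigma^{\ast}W)_{\mid E}$ exhibits decomposition type $m$ with respect to $E$, and by Lemma~\ref{lemma-prime-to-p-infinity} the decomposition type with respect to $E$ coincides with the one with respect to $C_{r,split}$. Hence $V\in Z^{s,r,d}(m)$, so $\dim(Z^{s,r,d}(m))\geq rm(g_C-1)+1$, and combined with the upper bound this yields the equality in~(ii). The delicate step is precisely the control of the decomposition type over the combined cover $E$: one must know that prime to $p$ stability of $W$ propagates to all its conjugates and survives the prime to $p$ pullback to $E$, and that the compositum remains prime to $p$ so that Lemma~\ref{lemma-prime-to-p-infinity} applies.
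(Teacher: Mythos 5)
Your proposal is correct and follows essentially the same route as the paper: the upper bounds come from applying Theorem~\ref{theorem-dimension-estimate}~(iii) to $\pi_{r,split}$, and the lower bound in~(ii) comes from pushing forward prime to $p$ stable bundles in $U'=U^{m,d}\cap M^{p'-s,m,d}_D$ along a degree-$n$ cyclic cover and using finiteness of $\pi_{\ast}$. Your explicit passage to a common prime to $p$ Galois cover $E$ dominating $D$ and $C_{r,split}$, combined with Lemma~\ref{lemma-prime-to-p-infinity}, just spells out a step the paper leaves implicit.
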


\begin{proof}
    (i): The estimate follows immediately from applying Theorem \ref{theorem-dimension-estimate}
    to the prime to $p$ Galois cover $C_{r,split}\to C$.

    (ii): Let $n=\frac{r}{m}$ and assume that $n$ is prime to $p$.
    Then cyclic covers of degree $n$ correspond to line bundles of order $n$.
    Thus, there exist such cyclic covers. Let $\pi:D\to C$ be cyclic of degree $n$.
    Denote the Galois group by $G$.

    By Lemma \ref{lemma-construction-orbit} and Lemma \ref{lemma-pushforward-torsor}, 
    the locus $U\subseteq M^{s,m,d}_D$ of stable vector bundles $W$ on $D$
    such that $\pi_{\ast}W$ is stable
    is open and non-empty.
    Let $M^{p'-s,m,d}_D$ be the locus of prime to $p$ stable vector bundles on $D$.
    Recall that $M^{p'-s,m,d}_D$ is non-empty and open, see \cite[Theorem 2]{fun}.
    As $M^{s,m,d}_D$ is irreducible, the intersection
    $U':=U\cap M^{p'-s,m,d}_D$ is open and non-empty as well.

    We claim that for $W\in U'$ the direct image $\pi_{\ast}W$ has prime to $p$ 
    decomposition type $m$. Indeed, by affine base change we obtain
    $\pi^{\ast}\pi_{\ast}W\cong \bigoplus_{\sigma\in G}\sigma^{\ast}W$ 
    which is a direct sum of prime to $p$ stable vector bundles.

       Pushforward induces a finite morphism
    $U \xrightarrow{\pi_{\ast}} M^{s,r,d}_C$,
    see Lemma \ref{lemma-pushforward-torsor}.
    We obtain
    \[
        \dim(M^{ss,m,d}_D)= \dim(U') \leq \dim(Z^{s,r,d}(m)).
    \]
    Thus, we conclude 
    \[
        \dim(Z^{s,r,d}(m))=rm(g_C-1)+1
    \]
    by Riemann-Hurzwitz and (i).
\end{proof}

\begin{remark}
    Note that the dimension estimates can be far from being sharp if $p\mid r$:
    If $r=p^n$ for some $n\geq 1$ and $d$ is prime to $p$, then
    every semistable vector bundle of rank $r$ and degree $d$ is prime to $p$ stable.
    In particular, the decomposition strata $Z^{s,r,d}(m)$ are empty for $m<r$.
\end{remark}

\section{The closure of prime to p trivializable bundles}

As an application of the theory developed we study
the closure of the prime to $p$ trivializable stable vector bundles.

In positive characteristic the \'etale trivializable bundles are 
dense by a theorem due Ducrohet and Mehta, see \cite[Corollary 5.1]{dm}. 
This is no longer the case if we only consider 
prime to $p$ trivializable bundles as the general bundle 
remains stable on all prime to $p$ covers, see
\cite[Theorem 2]{fun}.
It is natural to ask the question what the closure $Z$ of 
the prime to $p$ trivializable bundles is.
This is closely related to the smallest prime to $p$ decomposition stratum
and we can estimate the dimension of $Z$. 
This estimate is sharp if the characteristic is avoided.

\begin{definition}
    Let $X$ be a normal projective variety. 
    We call a vector bundle $V$ of rank $r$
    on $X$ \emph{prime to $p$ trivializable} if 
    there exists a prime to $p$ cover $Y\to X$ such that 
    $V_{\mid Y}$ is the trivial bundle of rank $r$.
\end{definition}
Note that as every prime to $p$ cover is dominated by a prime to $p$ 
Galois cover, we could have alternatively required that
prime to $p$ trivializable vector bundles become trivial on a prime to $p$
Galois cover.

As a direct consequence of \cite[1.2 Proposition]{ls}, 
representations of the prime to $p$ completion of the \'etale fundamental group
correspond to prime to $p$ trivializable vector bundles.

Also recall that in rank $1$ the prime to $p$ trivializable line bundles are dense
in $\Pic^0_C$ if $C$ is a smooth projective curve.

\begin{theorem}
\label{theorem-closure}
    Let $C$ be a smooth projective curve of genus $g_C\geq 2$. Let $r\geq 2$.
    Let $Z^{s,r}$ be the closure of the prime to $p$ 
    trivializable stable vector bundles in $M^{s,r,0}_C$ and $Z^{ss,r}$ be the closure of the prime to $p$ 
    trivializable vector bundles in $M^{ss,r,0}_C$.
    Then we have the following:
    \begin{enumerate}[(i)]
        \item $Z^{s,r}\subseteq Z^{s,r,0}(1)$.
        \item $\dim(Z^{s,r})\leq r'(g_C-1)+1$,
            where $r'$ is the prime to $p$ part of $r$.
        \item If $p\nmid r$, then $\dim(Z^{s,r})= \dim(Z^{s,r,0}(1))=r(g_C-1)+1$.
        \item $\dim(Z^{ss,r})=rg_C$.
    \end{enumerate}
\end{theorem}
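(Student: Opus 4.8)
The plan is to deduce (i)--(iii) from the established behaviour of the smallest prime to $p$ decomposition stratum $Z^{s,r,0}(1)$, and to treat (iv) by a separate analysis of polystable representatives. For (i), given a prime to $p$ trivializable stable bundle $V$, Lemma \ref{lemma-prime-to-p-infinity} writes $V_{\mid C_{r,split}}\cong\bigoplus_{i=1}^n W_i^{\oplus e}$ with the $W_i$ prime to $p$ stable of rank $m$, the decomposition type of $V$. Passing to a prime to $p$ Galois cover dominating both $C_{r,split}$ and a cover on which $V$ trivializes (prime to $p$ covers are dominated by prime to $p$ Galois covers), the bundle $V$ becomes trivial, so by Krull--Schmidt each $W_i$ restricts there to a trivial bundle; hence each $W_i$ is prime to $p$ trivializable. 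But a prime to $p$ stable bundle of rank $\geq 2$ remains stable, in particular indecomposable, on every prime to $p$ cover, contradicting its becoming trivial on one. Thus $m=1$ and $V\in Z^{s,r,0}(1)$. Since $Z^{s,r,0}(1)=Z^{s,r,0}(\cdot\mid 1)$ is closed by Lemma \ref{lemma-strata-closed}(i), the closure $Z^{s,r}$ is contained in it, which is (i); then (ii) follows at once from $\dim(Z^{s,r})\leq\dim(Z^{s,r,0}(1))\leq r'(g_C-1)+1$ by Theorem \ref{theorem-dimension-estimate-sharp}(i) with $m=1$.

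For (iii) the upper bound follows in the same way from Theorem \ref{theorem-dimension-estimate-sharp}(ii), so the content is the matching lower bound. Since $p\nmid r$ I would fix a cyclic Galois cover $\pi\colon D\to C$ of degree $r$ with group $G$ (such covers exist, corresponding to $r$-torsion line bundles) and push forward line bundles. By Lemmas \ref{lemma-construction-orbit} and \ref{lemma-pushforward-torsor}, $\pi_{\ast}$ is finite on the big open $U^{r,0}\subseteq\Pic^0_D$ of $L$ with $\sigma^{\ast}L\ncong L$ for all $\sigma\neq e_G$, and as in the proof of Theorem \ref{theorem-dimension-estimate-sharp}(ii) one has $\pi_{\ast}(U^{r,0})\subseteq Z^{s,r,0}(1)$ of dimension $g_D=r(g_C-1)+1$ by Riemann--Hurwitz. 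The key point is that if $L$ is a torsion line bundle of order prime to $p$, then $\pi_{\ast}L$ is prime to $p$ trivializable: using $\pi^{\ast}\pi_{\ast}L\cong\bigoplus_{\sigma\in G}\sigma^{\ast}L$ and a single prime to $p$ cover of $C$ dominating the cyclic covers that trivialize each $\sigma^{\ast}L$, the bundle $\pi_{\ast}L$ pulls back to a trivial bundle. As prime to $p$ torsion points are dense in $\Pic^0_D$, they are dense in $U^{r,0}$, so their images are dense in $\pi_{\ast}(U^{r,0})$; hence $Z^{s,r}\supseteq\pi_{\ast}(U^{r,0})$ and $\dim(Z^{s,r})\geq r(g_C-1)+1$, giving equality.

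Finally, for (iv) I would first show that the polystable representative of any prime to $p$ trivializable semistable bundle $V$ of degree $0$ is a direct sum $\bigoplus_i W_i^{\oplus a_i}$ of prime to $p$ trivializable stable bundles: pulling the Jordan--Hölder filtration of $V$ back to a prime to $p$ cover on which $V$ trivializes exhibits each $W_i$, restricted there, as a polystable summand of a trivial bundle, hence trivial (so each $W_i$ is prime to $p$ trivializable). Combining this with the bound of (ii) for each rank $m_i=\rk(W_i)$ (the rank-$1$ case contributing $\dim\Pic^0_C=g_C$), the locus of such $S$-classes has dimension at most $\sum_i\big(\rk(W_i)'(g_C-1)+1\big)$; since the $W_i$ are distinct and $\sum_i a_i\rk(W_i)=r$, both $\sum_i\rk(W_i)'\leq r$ and the number of distinct summands is at most $r$, so this is bounded by $rg_C$, with the maximum attained when all summands are line bundles. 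For the matching lower bound I would use that the direct sums $\bigoplus_{i=1}^r L_i$ of $r$ distinct prime to $p$ torsion line bundles are prime to $p$ trivializable semistable bundles, and that they sweep out the generically injective image of $\mathrm{Sym}^r\Pic^0_C$, a locus of dimension $rg_C$; density of the prime to $p$ torsion points then forces $\dim(Z^{ss,r})\geq rg_C$, hence equality.

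The main obstacle I anticipate is the lower bound in (iii) together with the structural decomposition in (iv): in both one must verify that the explicitly constructed bundles (pushforwards of torsion line bundles, respectively direct sums of torsion line bundles) are \emph{genuinely} prime to $p$ trivializable, which requires assembling a single prime to $p$ cover of $C$ trivializing all the relevant conjugate line bundles simultaneously, relying on the fact that prime to $p$ covers are dominated by prime to $p$ Galois covers. The density of prime to $p$ torsion points in $\Pic^0$, transported through the finite pushforward morphisms, is what upgrades these constructions into sharp dimension lower bounds.
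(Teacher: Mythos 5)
Your proposal is correct and follows essentially the same route as the paper: (i) via the observation that prime to $p$ trivializable bundles have decomposition type $1$ together with closedness of $Z^{s,r,0}(1)$, (ii) from the stratum estimate, (iii) by pushing forward prime to $p$ torsion line bundles along a cyclic degree-$r$ cover using the finiteness of $\pi_{\ast}$ on the open $U$ and density of torsion points, and (iv) by combining direct sums of line bundles for the lower bound with the partition-indexed images of products of the $\overline{Z^{s,r_i}}$ for the upper bound. The only differences are expository: you spell out via Krull--Schmidt why the stable summands on $C_{r,split}$ must be line bundles, and you phrase the lower bound in (iv) through $\mathrm{Sym}^r\Pic^0_C$ rather than the finite morphism from $\prod_{i=1}^r\Pic^0_C$.
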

   
\begin{proof}
    (i): The prime to $p$ decomposition type of a prime to $p$ trivializable bundle is $1$.
    Thus, we obtain $Z^{s,r}\subseteq Z^{s,r,0}(1)$ using that $Z^{s,r,0}(1)$
    is closed in $M^{s,r,0}_C$, see Lemma \ref{lemma-strata-closed}. 

    (ii): This is a direct consequence of (i) and the dimension estimate for the prime to $p$ decomposition strata,
    see Lemma \ref{theorem-dimension-estimate} (ii).

    (iii): Assume that $p\nmid r$. Let $\pi:D\to C$ be a cyclic cover of order $r$ and 
    Galois group $G\cong \mathbf{Z}/r\mathbf{Z}$.
    Consider the dense open subset $U\subseteq \Pic^0_D$ defined as
    \[
        U:=\{ L\in \Pic^0_D \mid \sigma^{\ast}L\ncong L, e_G\neq \sigma\in G\},
    \]
    see Lemma \ref{lemma-construction-orbit}.
    By Lemma \ref{lemma-pushforward-torsor}, pushforward induces 
    a finite morphism
    \[
        \pi_{\ast}:U\to M^{s,r,0}_C, L\mapsto \pi_{\ast}L.
    \]
    
    As the prime to $p$ trivializable line bundles are dense in $\Pic^0_D$
    and $\Pic^0_D$ is irreducible, we find that the prime to $p$ trivializable 
    line bundles contained in $U$ are still dense in $\Pic^0_D$.

    Note that for a prime to $p$ trivializable line bundle $L$ its conjugates $\sigma^{\ast}L,\sigma\in G$,
    are prime to $p$ trivializable as well. Thus, the pushforward $\pi_{\ast}L$
    is prime to $p$ trivializable as $\pi^{\ast}\pi_{\ast}L\cong \bigoplus_{\sigma\in G}\sigma^{\ast}L$.
    Therefore, $\pi_{\ast}$ factors as $U\to Z^{s,r}\to M^{s,r,0}_C$.

    The image $\pi_{\ast}(U)$ in $Z^{s,r}$ has dimension $\dim(U)=g_{D}$ 
    since $\pi_{\ast}$ is finite.
    By Riemann-Hurwitz and (ii) we conclude 
    \[
        r(g_C-1)+1\leq \dim(Z^{s,r})\leq \dim(Z^{s,r,0}(1))\leq r(g_C-1)+1.
    \]
    
    (iv): Observe that $Z^{ss,r}$ contains the image of the finite morphism
    \[
        \prod_{i=1}^r \Pic^0_C \to M^{ss,r,0}_C, (L_1,\dots,L_r)\mapsto \bigoplus_{i=1}^r L_i.
    \]
    Thus, we have $\dim(Z^{ss,r})\geq rg_C$.
    
    To obtain the other inequality, we claim that 
    $Z^{ss,r}$ is the union of the images of
    \[
        \varphi_{r_1,\dots,r_l}:\prod_{i=1}^{l} \overline{Z^{s,r_i}} \to M^{ss,r,0}_C, (V_1,\dots, V_l)\mapsto \bigoplus_{i=1}^l V_i
    \]
    for all possible ways to write $r=\sum_{i=1}^l r_i$ and the closure of $Z^{s,r_i}$ is taken in $M^{ss,r_i,0}_C$. 
    Indeed, the image of $\varphi_{r_1,\dots,r_l}$ is closed and 
    so is $\bigcup_{\sum_{i=1}^l r_i=r}\im(\varphi_{r_1,\dots,r_l})$.
    Furthermore, the union contains all prime to $p$ trivializable bundles of rank $r$.

    By (ii) we have 
    $\dim(\varphi_{r_1,\dots,r_l})\leq r(g_C-1)+l\leq rg_C$.
\end{proof}

\bibliographystyle{plain}

\bibliography{bibliography}

\end{document}